\documentclass[12pt,reqno]{amsart}
\usepackage[headings]{fullpage}
\usepackage{amssymb,amsmath,mathtools,bbm,tikz,tikz-cd,color,relsize,multirow}
\usepackage[all,cmtip]{xy}
\usepackage{url}
\usepackage{soul}
\usepackage[group-separator={,},group-minimum-digits={3}]{siunitx}
\usepackage{subcaption}

\usepackage{parskip}
\usetikzlibrary{positioning}
\usetikzlibrary{calc}
\usetikzlibrary{decorations.markings}
\usetikzlibrary{arrows}
\usetikzlibrary{calc}
\usetikzlibrary{decorations.markings}
\tikzstyle{hvector}=[inner sep=2pt,draw=blue!50,fill=blue!10,thick]
\tikzstyle{unit}=[inner sep=2pt,shape=circle, draw]
\tikzstyle{counit}=[inner sep=2pt,shape=circle, draw,fill=gray]
\tikzstyle{antipode}=[inner sep=2pt,shape=rectangle, draw]
\tikzstyle{cocycle}=[inner sep=2pt,shape=circle, draw]
\tikzstyle{twistedm}=[inner sep=2pt,shape=circle, fill=gray]
\tikzstyle{autom}=[inner sep=2pt,shape=circle, draw]
\tikzstyle{coact}=[inner sep=2pt,shape=circle, fill=black]

\usepackage[bookmarks=true,%
    colorlinks=true,%
    linkcolor=blue,%
    citecolor=blue,%
    filecolor=blue,%
    menucolor=blue,%
    urlcolor=blue,%
    breaklinks=true]{hyperref}
\usepackage{slashed}    
\usepackage{verbatim}
\usepackage[normalem]{ulem}  
\usepackage{diagbox}
\usepackage[shortlabels]{enumitem}

\usepackage{algorithm}
\usepackage{chngcntr}
\counterwithin{algorithm}{section}
\usepackage[noend]{algpseudocode}

\usepackage{xspace}
\algnewcommand\True{\textbf{True}\xspace}
\algnewcommand\False{\textbf{False}\xspace}

\usepackage{cleveref}

\usepackage{tikz,tikz-cd}
\usetikzlibrary{knots, hobby, decorations.pathreplacing,
  shapes.geometric, calc}

\usepackage{orcidlink}

\newtheorem{theorem}{Theorem}[section]
\theoremstyle{definition}
\newtheorem{proposition}[theorem]{Proposition}
\newtheorem{lemma}[theorem]{Lemma}
\newtheorem{definition}[theorem]{Definition}

\newtheorem{conjecture}[theorem]{Conjecture}
\newtheorem{question}[theorem]{Question}

\def\BZ{\mathbbm Z}

\def\coeff{\mathrm{coeff}}

\def\be{\begin{equation}}
\def\ee{\end{equation}}

\def\LG{\mathrm{LG}}

\def\sl21{\mathfrak{sl}_{2|1}}

\def\sdeghV2{\mathrm{sdeg}_\hbar^{V_2}}

\DeclareMathOperator{\GL}{GL}

\DeclareMathOperator{\conv}{conv}

\DeclareMathOperator{\dom}{dom}

\DeclareMathOperator{\face}{face}

\DeclareMathOperator{\supp}{supp}

\newcommand{\N}{\mathbb{N}}
\newcommand{\Q}{\mathbb{Q}}

\newcommand{\R}{\mathbb{R}}

\newcommand{\Z}{\mathbb{Z}}

\begin{document}

\title[Positivity and log concavity of the Links--Gould polynomial of knots]{
Positivity and log concavity of the Links--Gould polynomial of knots}

\author{Stavros Garoufalidis}
\address[Stavros Garoufalidis]{
  International Center for Mathematics, Department of Mathematics \\
  Southern University of Science and Technology \\
  Shenzhen, China \newline
  {\tt \url{http://people.mpim-bonn.mpg.de/stavros}} }
\email{stavros@mpim-bonn.mpg.de}

\author{Shana Yunsheng Li}
\address[Shana Yunsheng Li]{
  Department of Mathematics \\
  University of Illinois \\
  Urbana, IL, USA \newline
  {\tt \url{https://shana-y-li.github.io}}}
\email{yl202@illinois.edu}

\author{Josephine Yu}
\address[Josephine Yu]{School of Mathematics \\ 
Georgia Institute of Technology \\ Atlanta GA, USA \newline 
\tt\url{https://sites.gatech.edu/josephineyu/}}
\email{jyu@math.gatech.edu}

\thanks{
  {\em Key words and phrases:}
  Knots, links, alternating knots, Alexander polynomial, Links--Gould polynomial,
  $V_1$-polynomial, $\mathfrak{sl}(2|1)$ Lie superalgebra, Nichols algebras, 
  log-concavity.
}

\date{29 May 2026}

\begin{abstract}
  Motivated by the recent work of Harper--Kohli--Song--Tahar,  
  we formulate a positivity, hole-free, and log-concavity conjecture for the 
  Links--Gould polynomial of alternating links and verify it for all 51.3 million
  alternating knots with at most 19 crossings. All but 544 of those knots
  satisfy a stronger type-B log-concavity condition characterized by the slopes of
  edges in the subdivision of the monomial support induced by the log coefficients.
\end{abstract}

\maketitle

{\footnotesize
\tableofcontents
}


\section{Introduction}
\label{sec.intro}

It is well-known that every Laurent polynomial $f(t) \in \BZ[t^{\pm 1}]$ that
satisfies $f(1)=1$ can be realized as the Alexander polynomial of (infinitely many)
knots in $S^3$. This classical result was proven by Levine~\cite{Levine} using a 
Seifert matrix construction. For alternating knots, however, the situation is 
different. On one hand, there is no characterization of the set of Alexander polynomial
of alternating knots or links. On the other hand, Fox observed in the sixties that
the absolute values of the coefficients of the Alexander polynomial form a unimodal
sequence~\cite{Fox}, this being known as the Fox Trapezoidal Conjecture. An 
additional, also conjectural, hole-free and log-convexity property was observed by
Stoimenow~\cite{Stoimenow}. 

Our paper, however, is not about the long established conjectures about the
classical Alexander polynomial of alternating links, but about a quantum relative
of it, namely the Links--Gould polynomial associated with the 4-dimensional irreducible
representations of the Lie superalgebra $\mathfrak{sl}(2|1)$~\cite{Links-Gould}.
The Links--Gould polynomial of a link $L$ in $S^3$ is a 2-variable polynomial
$\LG_L(t_1,t_2) \in \BZ[t_1^{\pm 1},t_2^{\pm 1}]$ which is related to the Alexander
polynomial $\Delta_L(t)$ in two different ways \cite{DIL:LG,Ben-Michael:LG-Alex, 
KP:LG-Alex}
\be
\label{eq.2special}
\LG_L(t,t^{-1})= \Delta_L(t)^2, \qquad  \LG_L(t, -t^{-1}) =\Delta_L(t^2) \,.
\ee
The relation of the Links--Gould polynomial with the Alexander polynomial and 
with the the $V_1$-polynomial of~\cite{GK:multi} (which will play a role in the 
confirmation of our conjecture below) is discussed in~\cite{Ga:skein}.

Motivated by the properties of the Alexander polynomial of an alternating knot
and the above mentioned relation with the Links--Gould polynomial, the authors
in~\cite{HKST} formulated several
conjectures regarding the support and the coefficients of the Links--Gould
2-variable polynomials, and tested them for all alternating
knots with at most 12 crossings and a few further knots with at most 16 crossings. 
The conjectures themselves have several interconnected versions making it somewhat 
difficult to decide on the importance of each conjecture. 

In this note we formulate a single stronger conjecture concerning the coefficients
of the Links--Gould polynomials of alternating knots, and we confirm it for
all alternating knots with at most 19 crossings. 
To phrase our conjecture, we need some standard terminology from geometric
combinatorics.  Basic concepts can be found, for example, in~\cite{Ziegler,
triangulations}.

A {\em point configuration} is a finite subset  $A$ of the lattice $\BZ^d$.  
We say that $A$ is 
{\em hole-free} if
\be
\conv(A) \cap \BZ^d = A,
\ee
where $\conv(A)$ denotes the convex hull of $A$ in $\R^d$.
We say that a function $h: A \to \R$ is {\em concave extendable} if it can be 
extended to a concave function $\conv(A)\rightarrow \R$; equivalently, for every
relation of the form $b = c_1 a_1+ \dots +c_k a_k$, for
$a_1,\dots,a_k, b \in A$, $c_i>0$, and $\sum_{i=1}^k c_i = 1$, we have
\be
\label{hconv}
h(b) \geq c_1 h(a_1)+\dots+c_k h(a_k).
\ee
We say that $g : A \rightarrow \R_{>0}$ is {\em log-concave extendable} (or just 
{\em log-concave} for short) if the function $\log g : A \rightarrow \R$ given 
by $a \mapsto \log(g(a))$ is concave extendable.  This notion of log-concavity 
is strictly stronger than the ``2d-log-concavity'' used in~\cite{HKST}.

In this paper we consider functions $h: A \rightarrow \R$ arising from 
a Laurent polynomial
\be
p(x) = \sum_{a \in A} h(a) x^a \in \R[x^{\pm 1}]
\ee
where we denote $a=(a_1,\dots,a_d) \in \BZ$, $x=(x_1,\dots,x_d)$, $x^a=x_1^{a_1} \dots
x_d^{a_d}$ and $h(a) \in \R$. 
A Laurent polynomial $p(x)$
gives rise to a point configuration called the {\em support} of $p(x)$: \[\supp(p) := 
\{a \in \Z^d \mid \text{ the monomial } x^a \text{ has nonzero coefficient in } 
p(x)\},\] 
and the coefficient 
function $\coeff_p: \supp(p) \to \R\setminus \{0\}$ which sends a $a\in\supp(p)$ to the 
corresponding coefficient $h(a)$ of $x^a$ in $p(x)$.  

\begin{conjecture}
\label{conj.1}
For every alternating knot $K$, the polynomial $\LG_K(-t_1,-t_2)$ has positive
coefficients and hole-free support, and the coefficient function 
$\coeff_{\LG_K(-t_1,-t_2)}$ is log-concave.
\end{conjecture}

A numerical confirmation of the above conjecture requires data of the Links--Gould
polynomial, and these are obtained using the explicit relation~\cite{Ga:skein} 
\be
\LG_L(t_1,t_2) = V_1(t_1^{1/2}t_2^{-1/2},t_1^{-1/2}t_2^{-1/2})
\ee
between the Links--Gould polynomial and the $V_1$-polynomial of~\cite{GK:multi}
(where $V_{1,L}(t,q) \in \BZ[q^{\pm 1/2},t^{\pm 1}]$), together with 
the values of the $V_1$-polynomial ~\cite{GL:VnData} computed in~\cite{GL:patterns}.

In the course of confirming the above conjecture for knots with at most 19 
crossings, we found that for most
alternating knots the coefficient function of $\LG(-t_1,-t_2)$ satisfies a 
stronger {\em log concavity of type-B} defined in Section~\ref{sec.typeB}, which 
not only is strictly stronger than concave-extendability, but also easier to check. 

\begin{theorem}
\label{thm.1}
Conjecture~\ref{conj.1} holds for all \num{51280976} alternating knots with at 
most 19 crossings. All but 544 of them satisfy the stronger type-B log-concavity
defined in Section~\ref{sec.typeB}.
\end{theorem}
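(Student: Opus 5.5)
The statement is a finite verification, so the plan is computational. The work is to turn each of the three properties in Conjecture~\ref{conj.1}, and type-B log-concavity, into an exact and certifiable test on the finite data.

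\textbf{Step 1: obtain the polynomials.} For each of the \num{51280976} alternating knots with at most 19 crossings, take the precomputed $V_1$-polynomial from~\cite{GL:VnData}. Convert it to $\LG_K(t_1,t_2)$ through the substitution $\LG_L(t_1,t_2)=V_1(t_1^{1/2}t_2^{-1/2},t_1^{-1/2}t_2^{-1/2})$, and then substitute $t_i\mapsto -t_i$. As a consistency check on the data, verify the two specializations~\eqref{eq.2special} against the Alexander polynomial. The result is a point configuration $A=\supp(\LG_K(-t_1,-t_2))\subset\BZ^2$ together with integer coefficients $h(a)$.

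\textbf{Step 2: positivity and hole-freeness.} Positivity is a direct sign check on the $h(a)$. For hole-freeness, compute $\conv(A)$, which is a lattice polygon, by a planar convex hull algorithm. Then enumerate the lattice points of $\conv(A)$ row by row: for each $t_2$-exponent, intersect the horizontal line with the polygon and compare the resulting integer interval with the corresponding row of $A$.

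\textbf{Step 3: log-concavity.} Concave extendability of $\log h$ is equivalent to a statement about the regular subdivision of $A$ induced by the heights $\log h$. Every point of $A$ must lie on the upper hull of the lifted points $\{(a,\log h(a))\}$, so the subdivision marks all points. My plan is to compute this upper hull in $\R^3$ for each knot. This is the step I expect to be the main obstacle, because the heights are logarithms of integers and floating-point hull computations may misclassify nearly coplanar points.

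To handle this, I would certify the result exactly. Take the candidate subdivision produced numerically. For each cell, and for each point $b$ of $A$, the required inequality~\eqref{hconv} has rational weights $c_i$ (barycentric coordinates with respect to a triangle in the cell). It can therefore be checked exactly in the multiplicative form $h(b)^{N}\ge\prod h(a_i)^{Nc_i}$, where $N$ is a common denominator, using big integers. It suffices to check each $b$ against the triangles of the lifted upper hull, since these control all convex relations.

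\textbf{Step 4: type-B condition and the exceptions.} The type-B condition of Section~\ref{sec.typeB} is characterized by the slopes of the edges in this subdivision, presumably edges in a prescribed finite set of directions with local log-concavity along them. It is checked by inspecting the edges of the certified subdivision and testing the corresponding local exact inequalities of the form $h(b)^2\ge h(b-v)h(b+v)$ along the allowed directions. Counting the failures should give 544 knots. Each of these must then pass the full exact check of Step~3, which confirms Conjecture~\ref{conj.1} for all of them.
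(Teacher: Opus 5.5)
Your Steps 1 and 2 are fine; the direct lattice-point check of hole-freeness is a valid alternative to getting it from B-convexity. The gap is Step 3, which does not certify log-concavity. Checking $h(b)^N\ge\prod h(a_i)^{Nc_i}$ for points $b$ inside triangles of the numerically computed cells shows only that $\log h\ge\varphi_T$ on $A$. Here $\varphi_T$ is the piecewise-linear interpolation of $\log h$ over the candidate triangulation $T$. The true concave hull $\hat g$ satisfies $\hat g\ge\varphi_T$ for \emph{every} triangulation $T$ with vertices in $A$. So your inequality is a consequence of $\log h\ge\hat g$, not a substitute for it. It becomes sufficient only once you know $T$ is the subdivision actually induced by $\log h$, which is exactly what you set out to certify; your sentence ``it suffices to check each $b$ against the triangles of the lifted upper hull'' assumes it. Concretely, take a subdivision like that of \texttt{11a100}: unimodular triangles using every point of $A$ as a vertex. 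No point lies inside any cell, so your check is empty. In particular, the failure you worried about --- a point slightly below the true hull that rounding makes a vertex --- passes undetected. The missing half is dominance: for each candidate cell, every lifted point of $A$ must lie weakly below the plane through the lifted cell. That is the exact test~\eqref{eqn:final} of Section~\ref{sec:facets}. For log-concavity the paper avoids hulls entirely. By Lemma 3.3 of~\cite{BRSVY}, log-concavity is equivalent to $g(b)^2\ge g(a_0)g(a_1)$ on almost empty segments and $g(b)^3\ge g(a_0)g(a_1)g(a_2)$ on almost empty (area $\tfrac32$) triangles. All of these are enumerable and checkable in integer arithmetic (Algorithm~\ref{alg.exact-log-concave}).

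Step 4 does not establish the count 544 either. Type-B log-concavity is \emph{defined} by the exchange axiom of Definition~\ref{def.typeB}. The paper checks that axiom directly on all pairs $x,y$ in the support using integer products (Algorithm~\ref{alg.typeB}), with no subdivision. Theorem~\ref{thm:concave extendable} and Proposition~\ref{prop.nohole} then give log-concavity and hole-freeness at no extra cost, so only the 544 failures need the almost-empty-simplex test. You instead test two things: that edges have directions $e_i$, $e_i\pm e_j$, and midpoint inequalities $h(b)^2\ge h(b-v)h(b+v)$. The midpoint inequalities hold automatically once log-concavity is established, so they add nothing. The edge-direction condition is only shown to be \emph{implied} by B-concavity (Proposition~\ref{pr:B-slope}). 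To count type-B knots by slopes you would need the converse. That is, a log-concave function whose induced subdivision has only type-B edge directions must satisfy the exchange axiom, a type-B analogue of the maximizer-set characterization of $M^\natural$-concave functions. You neither state nor prove this. Without it, the knots failing your criterion need not be the 544 non-B-concave knots of the theorem.
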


The number of non-type-B alternating knots with at most 19 crossings are shown 
in the following table.
\begin{table}[htpb!]
\begin{center}
\begin{tabular}{|r|r|r|r|r|r|r|r|r|}
\hline
crossings & $\leq 12$ & 13 & 14 & 15 & 16 & 17 & 18 & 19
\\ \hline
\# alt-knots & \num{1851} & \num{4878} & \num{19536} & \num{85263} & \num{379799} 
& \num{1769979} & \num{8400285} &\num{40619385}
\\ 
\# non-type-B & 0 & 1 & 0 & 7 & 15 & 46 & 142 & 333
\\ \hline
\end{tabular}
\end{center}
\label{tab:knots}
\end{table}

The eight non-type-B knots with $\leq 15$ crossings are
\be
13a4593, \,\,
15a57208, \,\,
15a57492, \,\,
15a79064, \,\,
15a79080, \,\,
15a82231, \,\,
15a84755, \,\,
15a84831.
\ee
The list of all 544 non-type-B knots with $\leq 19$ crossings is given in 
Appendix~\ref{append.nontypeB}.

The proof of \Cref{thm.1} is by computation, and the plan of the computation is
as follows.
Since the vast majority of knots satisfy the stronger (and easier to check) 
type-B concavity, we discuss type-B concavity first in Section~\ref{sec.typeB}. Next 
we give an algorithm for certifying log-concavity in general, which uses exact
arithmetic without having to compute logarithms, given that our $\LG$-coefficients 
take integer values.  We also discuss how to certify computation of subdivisions 
of the support of  $\LG$ induced by the log coefficients. All our code and data 
are available at \cite{GLY:LGData}.


\section{Computations, examples and patterns}
\label{sec.patterns}

In the course of verifying \Cref{thm.1} we discovered further patterns
of the Links--Gould polynomial of alternating knots which we want to share as
soon as possible postponing the precise definitions, proofs
and algorithms to a later section.

\subsection{Slopes of the edges in subdivision}
\label{sec:slopes}

For any Laurent polynomial
$\sum_{a \in A} h(a) x^a$ in $\R_+[x^{\pm 1}]
$ with support $A \subset \Z^n$, the log coefficients induce a subdivision of $A$, 
as the projection of the upper hull of the set $\{(a,h(a)): a \in A\}$ in $\R^{n+1}$.
The function $h$ being log-concave of type-B implies that this subdivision has edges
only in directions of the type-B root system, $\pm e_i$ and $e_i \pm e_j$.  In other
words, the edges have slopes $0, \pm 1$, and $\infty$.
See Figure~\ref{fig.11a100} for examples of type-B subdivisions.  As stated in
Theorem~\ref{thm.1}, 99.999\% of the alternating knots with up to 19 crossings 
exhibit only the type-B slopes.

\begin{figure}[htpb]
  \centering
  \begin{tikzpicture}[
    scale=0.6,
    every node/.style={shape=coordinate},
    edge style/.style={draw=black, thin}
    ]

\coordinate (p0)  at (6,4);
\coordinate (p1)  at (4,6);
\coordinate (p2)  at (7,1);
\coordinate (p3)  at (1,7);
\coordinate (p4)  at (0,0);
\coordinate (p5)  at (4,0);
\coordinate (p6)  at (0,4);
\coordinate (p7)  at (6,0);
\coordinate (p8)  at (0,6);
\coordinate (p9)  at (3,0);
\coordinate (p10) at (0,3);
\coordinate (p11) at (1,0);
\coordinate (p12) at (0,1);
\coordinate (p13) at (5,1);
\coordinate (p14) at (1,5);
\coordinate (p15) at (6,2);
\coordinate (p16) at (2,6);
\coordinate (p17) at (7,2);
\coordinate (p18) at (2,7);
\coordinate (p19) at (2,1);
\coordinate (p20) at (1,2);
\coordinate (p21) at (3,2);
\coordinate (p22) at (2,3);
\coordinate (p23) at (5,0);
\coordinate (p24) at (0,5);
\coordinate (p25) at (7,4);
\coordinate (p26) at (4,7);
\coordinate (p27) at (7,3);
\coordinate (p28) at (6,5);
\coordinate (p29) at (5,6);
\coordinate (p30) at (3,7);
\coordinate (p31) at (3,1);
\coordinate (p32) at (1,3);
\coordinate (p33) at (5,2);
\coordinate (p34) at (2,5);
\coordinate (p35) at (2,0);
\coordinate (p36) at (0,2);
\coordinate (p37) at (4,4);
\coordinate (p38) at (1,1);
\coordinate (p39) at (4,1);
\coordinate (p40) at (1,4);
\coordinate (p41) at (6,1);
\coordinate (p42) at (1,6);
\coordinate (p43) at (2,2);
\coordinate (p44) at (4,3);
\coordinate (p45) at (3,4);
\coordinate (p46) at (4,2);
\coordinate (p47) at (2,4);
\coordinate (p48) at (5,4);
\coordinate (p49) at (4,5);
\coordinate (p50) at (5,5);
\coordinate (p51) at (3,3);
\coordinate (p52) at (6,3);
\coordinate (p53) at (3,6);
\coordinate (p54) at (5,3);
\coordinate (p55) at (3,5);

\draw
 (p6)--(p24) (p6)--(p10) (p10)--(p36) (p12)--(p36) (p4)--(p12) (p8)--(p24)
 (p2)--(p17) (p25)--(p27) (p17)--(p27)
 (p5)--(p23) (p5)--(p9) (p9)--(p35) (p11)--(p35) (p4)--(p11) (p7)--(p23)
 (p3)--(p18) (p26)--(p30) (p18)--(p30)
 (p28)--(p29) (p25)--(p28) (p26)--(p29)
 (p2)--(p7) (p3)--(p8)
 (p48)--(p49) (p37)--(p48) (p37)--(p49)
 (p44)--(p45) (p37)--(p44) (p37)--(p45)
 (p44)--(p51) (p45)--(p51)
 (p48)--(p50) (p49)--(p50)
 (p28)--(p50) (p29)--(p50)

 (p17)--(p41) (p15)--(p41) (p15)--(p17)
 (p18)--(p42) (p16)--(p42) (p16)--(p18)

 (p15)--(p54) (p15)--(p33) (p33)--(p54)
 (p16)--(p55) (p16)--(p34) (p34)--(p55)

 (p2)--(p41) (p3)--(p42)

 (p5)--(p13) (p13)--(p23)
 (p6)--(p14) (p14)--(p24)

 (p19)--(p31) (p21)--(p31) (p19)--(p21)
 (p20)--(p32) (p22)--(p32) (p20)--(p22)

 (p5)--(p39) (p9)--(p39)
 (p6)--(p40) (p10)--(p40)

 (p15)--(p52) (p52)--(p54)
 (p16)--(p53) (p53)--(p55)

 (p13)--(p39) (p13)--(p33) (p33)--(p39)
 (p14)--(p40) (p14)--(p34) (p34)--(p40)

 (p37)--(p54) (p48)--(p54)
 (p37)--(p55) (p49)--(p55)

 (p9)--(p31) (p31)--(p39)
 (p10)--(p32) (p32)--(p40)

 (p31)--(p35) (p32)--(p36)
 (p19)--(p35) (p20)--(p36)

 (p31)--(p46) (p21)--(p46)
 (p32)--(p47) (p22)--(p47)

 (p13)--(p15)
 (p14)--(p16)

 (p0)--(p50) (p0)--(p48)
 (p1)--(p50) (p1)--(p49)

 (p11)--(p19) (p12)--(p20)

 (p11)--(p38) (p19)--(p38)
 (p12)--(p38) (p20)--(p38) (p4)--(p38)

 (p13)--(p41) (p23)--(p41)
 (p14)--(p42) (p24)--(p42)

 (p0)--(p28) (p1)--(p29)

 (p0)--(p52) (p48)--(p52)
 (p1)--(p53) (p49)--(p53)

 (p0)--(p25) (p0)--(p27)
 (p1)--(p26) (p1)--(p30)

 (p19)--(p43) (p21)--(p43)
 (p20)--(p43) (p22)--(p43)

 (p27)--(p52) (p30)--(p53)

 (p38)--(p43)

 (p33)--(p46) (p39)--(p46)
 (p34)--(p47) (p40)--(p47)

 (p33)--(p44) (p44)--(p46)
 (p34)--(p45) (p45)--(p47)

 (p7)--(p41)
 (p8)--(p42)

 (p44)--(p54) (p45)--(p55)

 (p43)--(p51) (p21)--(p51) (p22)--(p51)
 (p46)--(p51) (p47)--(p51)

 (p17)--(p52)
 (p18)--(p53);

\foreach \i in {0,...,55} {
     \fill (p\i) circle (2pt);
}

\end{tikzpicture}
  \hspace{0.5in}
    \begin{tikzpicture}[
    scale=0.6,
    every node/.style={shape=coordinate},
    edge style/.style={draw=black, thin}]

\foreach \pts [count=\i from 0] in {
    (7,1), (1,7), (2,0), (0,2), (6,1), (1,6), (6,5), (5,6), (6,0), (0,6), (7,6),
    (6,7), (4,1), (1,4), (7,2), (2,7), (6,3), (3,6), (7,3), (3,7), (3,0), (0,3),
    (4,2), (2,4), (5,5), (3,2), (2,3), (7,4), (4,7), (5,3), (3,5), (5,2), (2,5),
    (3,1), (1,3), (5,0), (0,5), (6,4), (4,6), (4,0), (0,4), (5,4), (4,5), (7,5),
    (5,7), (3,3), (2,2), (5,1), (1,5), (4,4), (6,2), (2,6), (2,1), (1,2), (6,6),
    (4,3), (3,4), (1,1)
} {
    \node (p\i) at \pts {};
         \fill (p\i) circle (2pt);
}

\def\connect(#1,#2){\draw[edge style] (p#1) -- (p#2);}

\foreach \edge in {
    (3,21), (9,36), (21,40), (36,40), (0,14), (18,27), (27,43), (14,18), (10,43),
    (2,20), (8,35), (20,39), (35,39), (1,15), (19,28), (28,44), (15,19), (11,44),
    (0,8), (1,9), (10,11), (2,3), (52,53), (2,52), (3,53), (25,26), (25,46),
    (26,46), (0,4), (4,8), (1,5), (5,9), (4,14), (5,15), (25,45), (26,45), (46,52),
    (46,53), (10,54), (11,54), (6,41), (6,24), (24,41), (7,42), (7,24), (24,42),
    (33,46), (25,33), (34,46), (26,34), (6,37), (37,41), (7,38), (38,42), (16,29),
    (16,31), (29,31), (17,30), (17,32), (30,32), (12,25), (22,25), (12,22),
    (13,26), (23,26), (13,23), (20,52), (21,53), (16,37), (29,37), (17,38),
    (30,38), (16,50), (31,50), (17,51), (32,51), (33,52), (34,53), (12,33),
    (13,34), (24,54), (6,54), (7,54), (16,27), (27,37), (17,28), (28,38), (22,31),
    (22,29), (23,32), (23,30), (24,49), (41,49), (42,49), (16,18), (17,19),
    (22,45), (23,45), (37,43), (6,43), (38,44), (7,44), (29,41), (30,42), (22,47),
    (12,47), (23,48), (13,48), (4,35), (5,36), (33,39), (12,39), (34,40), (13,40),
    (45,49), (45,55), (49,55), (45,56), (49,56), (31,47), (32,48), (4,47), (35,47),
    (5,48), (36,48), (4,50), (14,50), (5,51), (15,51), (29,55), (22,55), (30,56),
    (23,56), (20,33), (21,34), (47,50), (48,51), (39,47), (40,48), (18,50),
    (19,51), (41,55), (42,56), (43,54), (44,54)
} {
    \expandafter\connect\edge
}

\end{tikzpicture}
  \caption{Projection of the concave hulls for the knots \texttt{11a100} and
  \texttt{11a165}.  The figure on the right shows a subdivision which is 
  not a triangulation, as it contains a non-simplicial tile on the lower left.}
  \label{fig.11a100}
\end{figure}

\begin{figure}[htbp]
     \centering
\begin{tikzpicture}[
    scale=0.6, 
    every node/.style={circle, fill=blue!10, draw=blue!50, inner sep=0.2pt, font=\tiny}
]
\foreach \p [count=\i from 0] in {(12,0), (0,12), (5,0), (0,5), (11,1), (1,11), (4,2), (2,4), (6,1), (1,6), (5,1), (1,5), (11,0), (12,1), (3,2), (2,3), (0,11), (1,12), (12,6), (6,12), (6,5), (5,6), (10,10), (12,4), (4,12), (7,1), (1,7), (6,0), (12,2), (0,6), (2,12), (11,3), (3,11), (4,4), (4,1), (1,4), (12,8), (8,12), (10,0), (12,7), (0,10), (7,12), (9,3), (3,9), (10,1), (1,10), (12,3), (3,12), (7,6), (6,7), (11,2), (2,11), (5,5), (10,3), (3,10), (6,6), (10,8), (8,10), (8,1), (1,8), (7,3), (3,7), (11,5), (5,11), (9,8), (8,9), (9,2), (2,9), (8,0), (0,8), (8,6), (6,8), (4,3), (3,4), (9,9), (7,7), (8,8), (8,2), (2,8), (11,6), (6,11), (10,9), (9,10), (8,4), (4,8), (9,1), (1,9), (10,7), (7,10), (11,8), (8,11), (9,5), (5,9), (8,5), (5,8), (11,4), (4,11), (7,2), (2,7), (5,2), (2,5), (5,3), (3,5), (6,3), (3,6), (7,5), (5,7), (3,3), (8,7), (7,8), (7,4), (4,7), (8,3), (3,8), (5,4), (4,5), (9,7), (7,9), (9,6), (6,9), (11,9), (9,11), (11,7), (7,11), (9,4), (4,9), (6,4), (4,6), (10,2), (2,10), (10,4), (4,10), (7,0), (9,0), (0,7), (0,9), (10,5), (5,10), (6,2), (2,6), (12,5), (5,12), (10,6), (6,10)} {
        \node (p\i) at \p {};
}
\foreach \u/\v in {1/16, 16/40, 3/29, 40/135, 29/134, 69/134, 69/135, 0/13, 18/39, 28/46, 13/28, 23/46, 36/39, 23/140, 18/140, 0/12, 12/38, 2/27, 38/133, 27/132, 68/132, 68/133, 14/15, 2/34, 3/35, 14/34, 15/35, 1/17, 19/41, 30/47, 17/30, 24/47, 37/41, 24/141, 19/141, 22/120, 22/121, 36/120, 37/121, 64/65, 64/74, 65/74, 81/82, 22/81, 22/82, 114/115, 52/114, 52/115, 48/49, 48/75, 49/75, 20/21, 20/55, 21/55, 20/52, 21/52, 48/55, 49/55, 14/107, 15/107, 72/73, 33/72, 33/73, 0/4, 4/12, 4/13, 1/5, 5/16, 5/17, 64/76, 65/76, 33/114, 33/115, 108/109, 75/108, 75/109, 76/108, 76/109, 74/81, 74/82, 72/107, 73/107, 38/44, 12/44, 40/45, 16/45, 44/85, 38/85, 45/86, 40/86, 56/64, 56/74, 57/65, 57/74, 66/85, 44/66, 67/86, 45/67, 48/70, 70/75, 49/71, 71/75, 52/126, 114/126, 52/127, 115/127, 50/53, 31/53, 31/50, 51/54, 32/54, 32/51, 60/77, 60/97, 77/97, 61/78, 61/98, 78/98, 55/105, 20/105, 55/106, 21/106, 60/112, 77/112, 61/113, 78/113, 6/107, 6/14, 7/107, 7/15, 39/122, 39/89, 89/122, 41/123, 41/90, 90/123, 6/99, 72/99, 6/72, 7/100, 73/100, 7/73, 60/110, 110/112, 61/111, 111/113, 18/122, 19/123, 110/126, 60/126, 111/127, 61/127, 2/10, 6/10, 2/6, 3/11, 7/11, 3/7, 10/27, 11/29, 20/126, 21/127, 48/105, 49/106, 48/93, 70/93, 49/94, 71/94, 64/87, 64/116, 87/116, 65/88, 65/117, 88/117, 8/10, 8/27, 9/11, 9/29, 83/112, 42/112, 42/83, 84/113, 43/113, 43/84, 20/110, 105/110, 21/111, 106/111, 10/99, 8/99, 11/100, 9/100, 56/87, 57/88, 76/116, 76/117, 81/120, 82/121, 93/105, 94/106, 50/128, 53/128, 51/129, 54/129, 53/130, 31/130, 54/131, 32/131, 4/44, 5/45, 70/108, 71/109, 70/91, 91/93, 71/92, 92/94, 4/50, 13/50, 5/51, 17/51, 8/25, 8/138, 25/138, 9/26, 9/139, 26/139, 83/110, 84/111, 72/101, 33/101, 73/102, 33/102, 42/66, 66/112, 43/67, 67/113, 25/58, 58/97, 25/97, 26/59, 59/98, 26/98, 83/105, 83/93, 84/106, 84/94, 42/124, 83/124, 43/125, 84/125, 42/53, 53/124, 43/54, 54/125, 99/101, 100/102, 101/114, 102/115, 4/128, 44/128, 5/129, 45/129, 28/31, 28/50, 30/32, 30/51, 70/118, 91/118, 71/119, 92/119, 85/133, 86/135, 25/68, 58/68, 26/69, 59/69, 60/103, 97/103, 61/104, 98/104, 42/128, 43/129, 66/77, 77/85, 67/78, 78/86, 8/132, 25/132, 9/134, 26/134, 66/128, 67/129, 108/118, 109/119, 93/124, 94/125, 6/34, 7/35, 58/77, 59/78, 31/46, 32/47, 103/126, 104/127, 58/85, 59/86, 108/116, 109/117, 124/130, 125/131, 62/95, 95/136, 62/136, 63/96, 96/137, 63/137, 23/95, 23/62, 24/96, 24/63, 91/124, 92/125, 56/81, 57/82, 31/95, 46/95, 32/96, 47/96, 99/138, 100/139, 95/130, 96/131, 103/114, 101/103, 104/115, 102/104, 97/138, 98/139, 79/122, 18/79, 80/123, 19/80, 116/118, 117/119, 87/122, 56/122, 88/123, 57/123, 56/89, 81/89, 57/90, 82/90, 36/89, 37/90, 130/136, 131/137, 91/130, 92/131, 79/87, 80/88, 91/136, 118/136, 92/137, 119/137, 89/120, 90/121, 103/138, 104/139, 87/142, 116/142, 88/143, 117/143, 101/138, 102/139, 62/140, 63/141, 79/140, 80/141, 58/133, 59/135, 62/79, 79/142, 62/142, 63/80, 80/143, 63/143, 118/142, 119/143, 136/142, 137/143} {
    \draw (p\u) -- (p\v);
}
\foreach \l [count=\i from 0] in {, , 3, 3, , , 39, 39, , , 29, 29, , , , , , , , , , , , , , , , , , , , , , , 4, 4, , , , , , , , , , , , , , , , , , , , , , , , , , , , , , , , , , , , , , , , , , , , , , , , , , , , , , , , , , , , , , , , , , , , , , , , , , , , , , , , , , , , , , , , , , , , , , , , , , , , , , , , , , , , } {
    \node at (p\i) {\l};
}
\end{tikzpicture}
         \caption{Projection of the concave hull for the knot {\tt 15a57208}.  
         The subdivision contains edges with slopes $-2$ and $-\frac{1}{2}$.   
         Labels show some coefficients of $\LG(-t_1,-t_2)$.}
         \label{fig.15a57208}
\end{figure}

However, a small number of non-type-B knots exist, whose $\LG$-coefficients are 
log-concave, but not type-B log-concave.
A sample non-type-B subdivision is shown in \Cref{fig.15a57208}. Inspection of 
this and subdivisions of the remaining 544 non-type-B knots up to 19 crossings 
shows that their edges have slopes $0,\infty,1,-1$ (as type-B have) or 
$2,-2,1/2,-1/2$. 

This motivates the question~\eqref{conj.slope} stated in the introduction. We 
caution, however, that due to the small number of non-type-B knots, the found 
slopes may not be representative for alternating knots with more than 19 crossings.

\begin{question}
\label{conj.slope}
For every alternating knot $K$, do the slopes of edges of the subdivision induced 
by the log coefficients of $\LG_K(-t_1,-t_2)$ belong to 
$\{0, \infty, \pm 1, \pm 2, \pm \frac{1}{2}\}$? 
\end{question}

\subsection{Non-simplicial faces of the subdivision}
\label{subsec.nonsimplicial}

If the coefficients are sufficiently generic numbers, the tiles in the subdivision 
are expected to be standard triangles of area $1/2$, as in the case of the 
knot \texttt{11a100} whose subdivision
is shown in Figure~\ref{fig.11a100}. 
However in many cases the subdivision contains tiles other than the
standard triangles, which we call {\em non-simplicial faces}. Although we have 
not listed all such knots up to 19 crossings,
\num{163931} out of \num{2261237} type-B alternating knots up to 17 crossings have
subdivisions that contains tiles other than standard triangles, and the ratio for 
non-type-B alternating knots up to 17 crossings is \num{24} out of \num{69}. See
Figure~\ref{fig.11a100} (right) and Figures~\ref{fig.11a364} for examples.

\begin{figure}[htbp!]
     \centering
     \begin{tabular}{lr}
        \begin{tikzpicture}[
    scale=0.6,
    every node/.style={shape=coordinate},
    edge style/.style={draw=black, thin}]

\foreach \pts [count=\i from 0] in {
 (9,1), (9,2), (1,9), (2,9), (2,2), (5,2), (2,5), (0,0), (8,0), (0,8), (2,1), (6,1),
       (1,2), (1,6), (8,1), (1,8), (1,0), (7,0), (0,1), (0,7), (7,2), (2,7), (6,2), (8,2),
       (2,6), (2,8), (1,1), (7,1), (1,7), (4,4), (4,5), (4,6), (3,0), (4,7), (3,1), (3,2),
       (3,3), (3,4), (3,5), (3,6), (3,7), (2,0), (3,8), (2,3), (2,4), (1,3), (1,4), (1,5),
       (0,2), (0,3), (0,4), (0,5), (0,6), (8,3), (7,3), (7,4), (6,0), (6,3), (6,4), (6,5),
       (5,0), (5,1), (5,3), (5,4), (5,5), (5,6), (4,0), (4,1), (4,2), (4,3)
} {
 \fill \pts circle (2pt);
    \node (p\i) at \pts {};
}

\def\connect(#1,#2){\draw[edge style] (p#1) -- (p#2);}

\foreach \edge in {
(18,19), (7,18), (9,19), (16,17), (7,16), (8,17), (2,9), (0,8), (0,1), (2,3),
       (1,3), (4,6), (4,5), (5,6), (1,14), (0,14), (3,15), (2,15), (1,23), (3,25), (23,25),
       (12,13), (6,13), (4,12), (10,11), (5,11), (4,10), (22,24), (5,22), (6,24), (12,18),
       (13,19), (10,16), (11,17), (14,23), (15,25), (8,14), (9,15), (20,21), (20,22),
       (21,24), (20,23), (21,25), (7,26), (16,26), (17,27), (8,27), (18,26), (19,28),
       (9,28), (10,26), (11,27), (12,26), (13,28), (11,22), (13,24), (4,26), (14,27),
       (15,28), (14,20), (15,21), (22,27), (24,28), (20,27), (21,28)
} {
    \expandafter\connect\edge
}

\end{tikzpicture} &
\begin{tikzpicture}[
    scale=0.8,
    every node/.style={circle, fill=blue!20, draw=blue!80, inner sep=1pt, font=\tiny},
    edge style/.style={draw=black!60, thin}
    ]

\foreach \pts [count=\i from 0] in {
    (9,1), (9,2), (1,9), (2,9), (2,2), (5,2), (2,5), (0,0), (8,0), (0,8), (2,1), (6,1),
       (1,2), (1,6), (8,1), (1,8), (1,0), (7,0), (0,1), (0,7), (7,2), (2,7), (6,2), (8,2),
       (2,6), (2,8), (1,1), (7,1), (1,7), (4,4), (4,5), (4,6), (3,0), (4,7), (3,1), (3,2),
       (3,3), (3,4), (3,5), (3,6), (3,7), (2,0), (3,8), (2,3), (2,4), (1,3), (1,4), (1,5),
       (0,2), (0,3), (0,4), (0,5), (0,6), (8,3), (7,3), (7,4), (6,0), (6,3), (6,4), (6,5),
       (5,0), (5,1), (5,3), (5,4), (5,5), (5,6), (4,0), (4,1), (4,2), (4,3)
} {
    \node (p\i) at \pts {};
}

\def\connect(#1,#2){\draw[edge style] (p#1) -- (p#2);}

\foreach \edge in {
(18,19), (7,18), (9,19), (16,17), (7,16), (8,17), (2,9), (0,8), (0,1), (2,3),
       (1,3), (4,6), (4,5), (5,6), (1,14), (0,14), (3,15), (2,15), (1,23), (3,25), (23,25),
       (12,13), (6,13), (4,12), (10,11), (5,11), (4,10), (22,24), (5,22), (6,24), (12,18),
       (13,19), (10,16), (11,17), (14,23), (15,25), (8,14), (9,15), (20,21), (20,22),
       (21,24), (20,23), (21,25), (7,26), (16,26), (17,27), (8,27), (18,26), (19,28),
       (9,28), (10,26), (11,27), (12,26), (13,28), (11,22), (13,24), (4,26), (14,27),
       (15,28), (14,20), (15,21), (22,27), (24,28), (20,27), (21,28)
} {
    \expandafter\connect\edge
}

\foreach \l [count=\i from 0] in {
1, 1, 1, 1, 18, 18, 18, 3, 3, 3, 14, 14, 14, 14, 6, 6, 5, 5, 5, 5, 10, 10, 16, 4,
       16, 4, 12, 12, 12, 16, 10, 4, 5, 1, 14, 18, 18, 18, 16, 10, 4, 5, 1, 18, 18, 14, 14,
       14, 5, 5, 5, 5, 5, 1, 4, 1, 5, 10, 4, 1, 5, 14, 16, 10, 4, 1, 5, 14, 18, 18
} {
    \node at (p\i) {\l};
}

\end{tikzpicture}
     \end{tabular}
         \caption{Projection of the concave hull for the type-B knot {\tt 11a364}. 
         Labels show the coefficients of $\LG(-t_1,-t_2)$.}
         \label{fig.11a364}
     \end{figure}

Moreover, from the limited data we observe that if a two dimensional face has a point 
in its interior, then it also has a point in the relative interior of one of its edge. 
For example, the following configurations cannot be faces in the subdivision although
they have the expected slopes.
\begin{equation}
 \label{motzkin}   
             \begin{tikzpicture}[
    scale=0.6,
    every node/.style={shape=coordinate},
    edge style/.style={draw=black, thin}]

\foreach \pts [count=\i from 0] in {
   (0,0),
   (1,1),
   (2,1),
   (1,2),
   (4,1),
   (5,1),
   (5,0),
   (6,2),
   (10,0),
   (9,1),
   (8,1),
   (9,2),
   (12,2),
   (13,1),
   (13,0),
   (14,1)
} {
    \node (p\i) at \pts {};
         \fill (p\i) circle (2pt);
}

\def\connect(#1,#2){\draw[edge style] (p#1) -- (p#2);}

\foreach \edge in {
   (0,2),
   (0,3),
   (2,3),
   (4,6),
   (4,7),
   (6,7),
   (8,10),
   (8,11),
   (10,11),
   (12,14),
   (12,15),
   (14,15)
} {
    \expandafter\connect\edge
}

\end{tikzpicture} 
 \end{equation}

\subsection{Alexander versus Links--Gould polynomial concavity}

In this section we comment on the relationship between $\LG$-polynomial concavity
and the Alexander-polynomial concavity. This is summarized as the following relation:
\be
\label{eq.LGA}
\LG_K(-t_1,-t_2)\text{-log-concave} \not\Rightarrow \Delta_K(-t)^2\text{-log-concave}
\Leftarrow \Delta_K(-t)\text{-log-concave}
\ee
which we now explain. 
Let $\Delta_K(t)$ be the Alexander polynomial of an alternating knot $K$; it is known
that the coefficients of $\Delta_K(-t)$ are always nonnegative \cite{Crowell,Murasugi}. 
Fox conjectured
~\cite{Fox} that the coefficients $a_i$ of $\Delta_K(-t)$ satisfy the {\em trapezoidal}
property 
\begin{equation}
\label{eqn:trapezoid}
    0 = a_{-n-1}<a_{-n} < a_{-n+1}<\cdots  a_k = \cdots = a_m > a_{m+1}>\cdots 
    >a_n>a_{n+1}=0
\end{equation}
for some $-n\leq k\leq m\leq n$, and Stoimenow further conjectured~\cite{Stoimenow} 
that 
they are log-concave, that is for any three consecutive coefficients 
$a_{i-1}, a_i, a_{i+1}$, we have 
\begin{equation}
\label{eqn:logconcave}
a_{i-1}\cdot a_{i+1} \leq a_i^2 \,.
\end{equation}
It is well-known that log-concavity implies the condition~\eqref{eqn:trapezoid}.

The Fox--Stoimenow conjecture, together with the fact that products of univariate
polynomials with log-concave positive coefficients also have log-concave
coefficients~\cite{Hoggar}, implies that 
the coefficients of $\Delta_K(-t)^2$ are log-concave. On the other hand,
Equation~\eqref{eq.2special} 
implies that $\Delta_K(-t)^2=\LG_K(-t,-t^{-1})$, that is, the coefficients of 
$\Delta_K(-t)^2$ are sums of coefficients of terms in $\LG_K(-t,-t^{-1})$ along the
diagonals in direction $(1,1)$. Even though diagonal specializations of continuous
functions preserve log-concavity~\cite[Theorem 6]{Prkopa}, a diagonal specialization 
of discrete log-concave functions need not be log-concave, even for the type-B ones.
For example, the following configuration on the plane is type-B log-concave, but its
diagonal sum gives the sequence $1, 4, 17, 4, 1$, which is not log-concave; rotating
the picture $90^\circ$, we obtain a diagonal sum sequence $4,4,11,4,4$ which gives
another counterexample.
 \begin{center}
             \begin{tikzpicture}[
    scale=0.8,
    every node/.style={circle, fill=blue!20, draw=blue!80, inner sep=1pt, font=\tiny},
    edge style/.style={draw=black!60, thin}
    ]

\foreach \pts [count=\i from 0] in {
   (0,0),
   (1,1),
   (2,2),
   (0,1),
   (1,0),
   (2,1),
   (1,2),
   (0,2),
   (2,0)
} {
    \node (p\i) at \pts {};
}

\def\connect(#1,#2){\draw[edge style] (p#1) -- (p#2);}

\foreach \edge in {
(0,7),
(0,8),
(2,7),
(2,8),
(0,2)
} {
    \expandafter\connect\edge
}

\foreach \l [count=\i from 0] in {
4,9,4,2,2,2,2,1,1
} {
    \node at (p\i) {\l};
}

\end{tikzpicture} 
 \end{center} 
This explains the non-implication on the left of Equation~\eqref{eq.LGA}.
This suggests that, there is a stronger stronger version of Conjecture~\ref{conj.1}
which is compatible with the Fox--Stoimenow conjecture.

We end this section with an observation relating the nonsimplicial faces discussed 
in the previous section to Alexander polynomials. We will say that a knot is
\textit{properly trapezoidal} if $k < m$ in condition~\eqref{eqn:trapezoid}. In the
subdivision induced by log LG-coefficients, the existence of nonsimiplicial faces 
means that there are lifted points near each other that are affinely dependent, and 
an analogous property for the Alexander coefficients would be for the log-concavity
property~\eqref{eqn:logconcave} to be attained at equality.

Among the alternating knots with up to 17 crossings, there are \num{4118} knots
satisfying the inequality~\eqref{eqn:logconcave} at equality for some $i$, which 
are precisely the ones whose Alexander polynomials are properly trapezoidal.  
Moreover, all these \num{4118} knots have nonsimplicial faces in the sense that we 
discussed in Section~\ref{subsec.nonsimplicial}. This gives us a single-way 
implication, recalling that there are \num{163931} alternating knots with 
nonsimpilicial faces up to 17 crossings.

\begin{question}
\label{conj.trapezoid}
If an alternating knot $K$ has properly trapezoidal Alexander polynomial, does it
always have a nonsimplicial face in the subdivision induced by the log coefficients 
of $\LG_K(-t_1,-t_2)$? 
\end{question}


\section{Type-B concavity}
\label{sec.typeB}

In this section we introduce discrete concave functions of type-B and show that they
are concave extendable.  Although it is a stronger condition, type-B concavity is
easier to check than concave extendability.  As stated in Theorem~\ref{thm.1}, we 
show computationally that for 99.999\%, of alternating knots with up to 19 crossings,
the LG coefficients satisfy type-B log concavity, which implies log-concavity.  In 
this section and the next we explain the theory and algorithms behind the computation
that proves Theorem~\ref{thm.1}.

\subsection{Discrete convex sets of type-B}
\label{sub.basicsB}

 We will now define discrete convex sets of type-B, also known
as type-B $M^\natural$-convex sets, following Murota's work in type A; see Chapters 4
and 6 of~\cite{Murota}.  This will prepare us to discuss type-B concave functions in
the next subsection.
Let $e_1,\dots,e_n$ denote the standard basis vectors in $\Z^n$.  

\begin{definition}
\label{def.typeBset}
  A nonempty subset $A\subset \Z^n$ is called {\em $M^\natural$-convex of type-B}
  (we will call B-convex, for short) if it satisfies the following exchange axiom:
\begin{quote}
  For $x,y\in A$ and coordinate $i\in [n]$ such that $x_i > y_i$, one of the following
  holds:
\begin{enumerate}
    \item[(1)] the points $x - e_i$ and $y + e_i$ are both in $A$;
    \item[(2)] there exists a coordinate $j$ with $x_j < y_j$ such that $x-e_i+e_j$
      and $y+e_i-e_j$ are both in $A$;
    \item[(3)] there exists a coordinate $j \neq i$ with $x_j > y_j$ such that
      $x-e_i-e_j$ and $y+e_i+e_j$ are both in $A$.
\end{enumerate}
\end{quote}
\end{definition}

In other words, whenever there are two distinct points in the set $A$, we can walk
them closer to each other along the vectors in a type-B root system
($e_i$, $e_i\pm e_j$) while remaining in the set $A$.
The first two conditions give the classical (type $A$) $M^\natural$ convexity.  

Let $\conv(A)$ denotes the convex hull of $A$. The following is immedate from the
definition.

\begin{lemma}
\label{lem:dir}
If $A\subset \Z^n$ is B-convex and $\conv(A)$ is $1$-dimensional, then
$\conv(A)$ is parallel to $e_i$ or $e_i\pm e_j$.
\end{lemma}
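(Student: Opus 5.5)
The plan is to take two distinct points of $A$ and apply the exchange axiom once, then read off the direction of $\conv(A)$ from the step it produces. Since $\conv(A)$ is $1$-dimensional, $A$ lies on a line $\ell = \{p + t v : t\in\R\}$ for some nonzero $v\in\R^n$, and $A$ has at least two distinct points. Every difference of two points of $A$ is then a nonzero multiple of $v$. It therefore suffices to exhibit two points of $A$ whose difference is a root vector $\pm e_i$ or $\pm(e_i\pm e_j)$.

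Pick $x\neq y$ in $A$. Choose a coordinate $i$ with $x_i\neq y_i$, and after swapping $x$ and $y$ assume $x_i>y_i$. The exchange axiom gives one of three alternatives:
\begin{enumerate}
\item[(1)] $x-e_i\in A$;
\item[(2)] $x-e_i+e_j\in A$ for some $j$ with $x_j<y_j$, so $j\neq i$;
\item[(3)] $x-e_i-e_j\in A$ for some $j\neq i$.
\end{enumerate}
In each case the new point differs from $x$ by $-e_i$, $-e_i+e_j$, or $-e_i-e_j$ respectively, with $j\neq i$ in the last two. Each of these vectors is nonzero, and each is a root of the stated form.

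Both $x$ and the new point lie in $A\subset\ell$, so $v$ is parallel to that root. Hence $\conv(A)$, which is a segment in $\ell$, is parallel to $e_i$ or to $e_i\pm e_j$. Only one point of the exchange pair is needed; the partner point in $A$ plays no role.

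There is no real obstacle here. The only care needed is checking that $j\neq i$ in cases (2) and (3), so that the difference vector is nonzero and is genuinely $e_i\pm e_j$ rather than $0$ or $2e_i$. In case (2) this holds because $x_j<y_j$ while $x_i>y_i$; in case (3) it is stated explicitly in Definition~\ref{def.typeBset}.
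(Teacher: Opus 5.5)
Your argument is correct and matches the paper: the paper gives no proof beyond calling the lemma immediate from Definition~\ref{def.typeBset}, and your single application of the exchange axiom is exactly that immediate argument. You also handled the one point that needs care, namely checking that $j\neq i$ in cases (2) and (3) so that the step is a genuine nonzero root.
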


\begin{proposition}
\label{prop.nohole}
The B-convex sets are hole-free, i.e.\ for any B-convex set $A$, we have
$\conv(A) \cap \Z^n = A.$  
\end{proposition}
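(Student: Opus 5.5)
The plan is an exchange argument driven by a potential function. Fix a B-convex set $A$ and an integer point $z\in\conv(A)\cap\Z^n$; we must show $z\in A$. The inclusion $A\subseteq \conv(A)\cap\Z^n$ is trivial, so this is all that is needed. By Carath\'eodory, $z$ is a convex combination of finitely many points of $A$. Let $B$ be the smallest box $\prod_i[\ell_i,u_i]$ containing these points, and set $A_B=A\cap B$. This is a finite set, and $z\in\conv(A_B)$.

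First observe that every exchange in Definition~\ref{def.typeBset} applied to $x,y\in A_B$ produces points $x',y'$ with $x'+y'=x+y$ and with each coordinate of $x'$, $y'$ lying between the corresponding coordinates of $x$ and $y$. Hence the exchange never leaves $B$, and we may work entirely inside the finite set $A_B$. Consider the compact set
\[
\Lambda=\Big\{\lambda\in\R_{\ge 0}^{A_B} : \sum_a\lambda_a=1,\ \sum_a\lambda_a a=z\Big\},
\]
which is nonempty, and the continuous potential $\Phi(\lambda)=\sum_a\lambda_a\|a-z\|_1$. Choose $\lambda\in\Lambda$ minimizing $\Phi$. It suffices to show that $\Phi(\lambda)=0$, since then every point with positive weight equals $z$, so $z\in A$.

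Suppose instead that some $x$ with $\lambda_x>0$ differs from $z$, say in coordinate $i$; by symmetry of the roles below, assume $x_i>z_i$. Because $z_i=\sum_a\lambda_a a_i$, some $y$ with $\lambda_y>0$ has $y_i<z_i$. Integrality gives $x_i\ge z_i+1$ and $y_i\le z_i-1$. Apply the exchange axiom to $x,y,i$ to obtain $x',y'\in A_B$. Then move weight $\mu=\min(\lambda_x,\lambda_y)>0$ from $x$ to $x'$ and from $y$ to $y'$. Since $x'+y'=x+y$, the new weight vector still lies in $\Lambda$.

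The key step, and the only place where care is needed, is checking that $\|x'-z\|_1+\|y'-z\|_1<\|x-z\|_1+\|y-z\|_1$ in each of the three cases.
\begin{itemize}
\item In coordinate $i$, both $x_i$ and $y_i$ move one step toward $z_i$, because they lie strictly on opposite sides of it. This contributes a decrease of exactly $2$.
\item In case (2), coordinate $j$ has $x_j<y_j$, and the values $x_j,y_j$ move one step toward each other. In case (3), coordinate $j$ has $x_j>y_j$, and again the values move toward each other. Moving two numbers toward each other by one step each never increases the sum $|x_j-z_j|+|y_j-z_j|$: when $x_j$ and $y_j$ are at least two apart, the sum is unchanged or decreases, by the triangle inequality. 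When they are adjacent, the move simply swaps them, leaving the sum unchanged.
\end{itemize}
So $\Phi$ drops by at least $2\mu>0$, contradicting minimality. Hence $\Phi(\lambda)=0$ and $z\in A$. This proves Proposition~\ref{prop.nohole}.

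The main obstacle I anticipate is termination. A naive iteration of exchanges with real weights might not obviously stop, since $\mu$ may shrink at each step. This is why I would phrase the argument as a minimization over the compact set $\Lambda$, which is available precisely because the exchanges stay inside the finite set $A_B$, rather than as an iterative procedure.
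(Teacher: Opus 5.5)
Your proof is correct. Its core step matches the paper's, which follows Murota's Theorem 4.12. You pick two points of the representation lying strictly on opposite sides of $z$ in coordinate $i$, apply the exchange axiom, and shift weight $\min(\lambda_x,\lambda_y)$ onto the exchanged pair, which lowers the $i$-th contribution by $2\min(\lambda_x,\lambda_y)$.

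Where you differ is in the potential and in the termination argument.

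The paper uses a separate potential $\Phi_i=\sum_k\lambda_k|x^{(k)}_i-x_i|$ for each coordinate and runs an explicit descent. With rational weights of common denominator $N$, every exchange lowers $\Phi_i$ by at least $2/N$ and preserves that denominator, so $\Phi_i$ reaches $0$ after finitely many steps. The coordinates are then cleared one at a time. This uses only the fact that a coordinate on which all points already agree can never be the auxiliary index $j$ in cases (2) or (3), since those cases require $x_j\neq y_j$.

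You instead use the total $\ell^1$ potential and an extremal argument over the compact polytope $\Lambda$. This avoids the rational choice of weights, which the paper needs but does not justify, and it avoids the coordinate-by-coordinate iteration. The price is two extra observations the paper does not need:
\begin{enumerate}
\item[(a)] The box $B$ is invariant under exchanges, so $A_B$ is finite and $\Lambda$ is compact. The paper's descent works directly with whatever finitely many points occur, so it never needs a finiteness reduction.
\item[(b)] Moving $x_j,y_j$ one step toward each other does not increase $|x_j-z_j|+|y_j-z_j|$. This is a convexity (majorization) fact rather than the triangle inequality, and it is what lets a single global potential decrease.
\end{enumerate}
The paper's version gives an explicit finite exchange procedure; yours gives a shorter one-shot contradiction.
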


The proof follows a similar argument as in Theorem 4.12 of Murota's book~\cite{Murota}. 

\begin{proof}
It is clear that $A \subset \conv(A) \cap \Z^n$.  For the reverse
inclusion let $x \in \conv(A)\cap\Z^n$.  We can write

\begin{equation}
\label{eqn:convComb}
x = \sum_{k=1}^m \lambda_k x^{(k)} \text{ where } x^{(k)} \in A,
\sum_{k=1}^m \lambda_k = 1, \text{ and } \lambda_k > 0 \text{ for }k=1,\dots,m.
\end{equation}  
Let $\Phi_i = \sum_{k=1}^m \lambda_k |x^{(k)}_i - x_i|$, be a measure of  complexity
of the representation~\eqref{eqn:convComb} for $i$th coordinate.  If $\Phi_i = 0$,
then all points $x^{(k)}$ have the same $i$th coordinate as $x$.
If $\Phi_i > 0$, then there are two points $x^{(k)},x^{(\ell)}$ such that
$x^{(k)}_i > x_i > x^{(\ell)}_i$.  Applying the exchange axiom  in 
Definition~\ref{def.typeBset} to $x^{(k)}$
and $x^{(\ell)}$ with coordinate $i$ gives one of the following.

\begin{enumerate}
\item[(1)]
  The points $x^{(k)}-e_i$ and $x^{(\ell)}+e_i$ are both in $A$.
  In \eqref{eqn:convComb} replace $\lambda_k x^{(k)} + \lambda_\ell x^{(\ell)}$ with:
  \begin{itemize}
  \item
    $\lambda_k(x^{(k)}-e_i) + \lambda_k (x^{(\ell)}+e_i)
     + (\lambda_\ell-\lambda_k) x^{(\ell)}$ if $\lambda_k < \lambda_\ell$,
   \item
     $\lambda_\ell(x^{(k)}-e_i) + \lambda_\ell (x^{(\ell)}+e_i)
     + (\lambda_k-\lambda_\ell) x^{(k)}$ if $\lambda_k > \lambda_\ell$.
  \end{itemize}
\item[(2)]
  There is a coordinate $j$ with $x^{(k)}_j < x^{(\ell)}_j$ such that
  $x^{(k)}-e_i+e_j$ and $x^{(\ell)}+e_i-e_j$ are both in $A$.  In \eqref{eqn:convComb}
  replace $\lambda_k x^{(k)} + \lambda_\ell x^{(\ell)}$ with:
  \begin{itemize}
  \item
    $\lambda_k(x^{(k)}-e_i+e_j) + \lambda_k (x^{(\ell)}+e_i-e_j)
    + (\lambda_\ell-\lambda_k) x^{(\ell)}$ if $\lambda_k < \lambda_\ell$,
  \item
    $\lambda_\ell(x^{(k)}-e_i+e_j) + \lambda_\ell (x^{(\ell)}+e_i-e_j)
    + (\lambda_k-\lambda_\ell) x^{(k)}$ if $\lambda_k > \lambda_\ell$.
  \end{itemize}
\item[(3)]
  There is a coordinate $j\neq i$ with $x^{(k)}_j > x^{(\ell)}_j$ such that
  $x^{(k)}-e_i-e_j$ and $x^{(\ell)}+e_i+e_j$ are both in $A$.  In
  \eqref{eqn:convComb} replace $\lambda_k x^{(k)} + \lambda_\ell x^{(\ell)}$ with:
  \begin{itemize}
  \item
    $\lambda_k(x^{(k)}-e_i-e_j) + \lambda_k (x^{(\ell)}+e_i+e_j)
    + (\lambda_\ell-\lambda_k) x^{(\ell)}$ if $\lambda_k < \lambda_\ell$,
  \item
    $\lambda_\ell(x^{(k)}-e_i-e_j) + \lambda_\ell (x^{(\ell)}+e_i+e_j)
    + (\lambda_k-\lambda_\ell) x^{(k)}$ if $\lambda_k > \lambda_\ell$.
  \end{itemize}
\end{enumerate}

Let $N$ be a positive integer such that $N\lambda_k$ is an integer for all
$k=1,\dots,m$. Each of the exchanges above reduces the complexity $\Phi_i$ by at
least $2 \min(\lambda_k,\lambda_\ell)\geq 2/N$, while preserving the condition
that $N\lambda_k\in \Z$ for all $k=1,\dots,m$.  Thus after finitely many exchanges,
we obtain $\Phi_i =0$.  Moreover, if $\Phi_j$ is already zero for some $j$, then
the exchanges do not affect $\Phi_j$.  Repeating this for each coordinate, we have
$\Phi_i = 0$ for all $i=1,\dots,n$, which implies that $x \in A$.
\end{proof}

\subsection{Discrete  concave functions of type-B}
\label{sub.typeBfun}

We define the {\em effective domain} of a function
$f : \Z^n \rightarrow \R \cup \{-\infty\}$
to be 
\[
\dom(f) := \{x \in \Z^n \mid f(x) \neq -\infty\}.
\]  
For simplicity, we will assume here at $\dom(f)$ is a finite set, although the
statements should extend to the infinite case, with some care.

\begin{definition}
\label{def.typeB}
A function $f : \Z^n \rightarrow \R \cup \{-\infty\}$ with $\dom(f)\neq \varnothing$
is called {\em $M^\natural$-concave of type-B} (or {\em B-concave}, for short),
if it satisfies the following exchange condition.
\begin{quote}
  For $x,y\in \dom(f)$ and coordinate $i\in [n]$ such that $x_i > y_i$, one of the
  following holds:
\begin{enumerate}
\item[(1)]
  $f(x)+f(y) \leq f(x - e_i) + f(y + e_i)$;
\item[(2)]
  there exists $j$ with $x_j < y_j$ such that
  $f(x)+f(y) \leq f(x-e_i+e_j)+f(y+e_i-e_j)$;
\item[(3)]
  there exists $j \neq i$ with $x_j > y_j$ such that
  $f(x)+f(y)\leq f(x-e_i-e_j)+f(y+e_i+e_j)$.
\end{enumerate}
\end{quote}
\end{definition}

The following is immediate from the definitions.

\begin{lemma}
The effective domain of a B-concave function is a B-convex set.
\end{lemma}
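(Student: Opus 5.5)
The plan is to verify the exchange axiom of Definition~\ref{def.typeBset} directly for $A=\dom(f)$, using only the inequalities of Definition~\ref{def.typeB} together with the fact that $f$ takes the value $-\infty$ exactly off $\dom(f)$. First, $\dom(f)$ is nonempty by the hypothesis in Definition~\ref{def.typeB}.

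Fix $x,y\in\dom(f)$ and a coordinate $i$ with $x_i>y_i$. By Definition~\ref{def.typeB}, one of the three alternatives (1), (2), (3) holds. Since $x,y\in\dom(f)$, the left-hand side $f(x)+f(y)$ is a real number. I would adopt the standard convention for $\R\cup\{-\infty\}$ that $-\infty$ plus anything is $-\infty$, and that a real number is never $\leq -\infty$.

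Suppose alternative (1) holds. Then the real number $f(x)+f(y)$ is bounded above by $f(x-e_i)+f(y+e_i)$, so this sum is not $-\infty$. Hence both summands are finite, meaning $x-e_i$ and $y+e_i$ lie in $\dom(f)$. This is exactly alternative (1) of Definition~\ref{def.typeBset}.

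The same argument applies to alternatives (2) and (3), with the same index $j$ and the same sign conditions on $x_j$ versus $y_j$. They yield, respectively, that $x-e_i+e_j$ and $y+e_i-e_j$ lie in $\dom(f)$, or that $x-e_i-e_j$ and $y+e_i+e_j$ lie in $\dom(f)$. These are alternatives (2) and (3) of Definition~\ref{def.typeBset}.

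There is no real obstacle here. The only point needing care is to state the arithmetic convention on $\R\cup\{-\infty\}$ explicitly, so that a finite sum forces each summand to be finite.
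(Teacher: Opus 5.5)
Your proof is correct, and it is exactly the direct verification the paper has in mind when it calls this lemma immediate from the definitions. Since $f(x)+f(y)$ is finite, whichever exchange inequality of Definition~\ref{def.typeB} holds forces both points on its right-hand side into $\dom(f)$, which is the matching exchange alternative for a B-convex set. Nonemptiness is built into the hypothesis $\dom(f)\neq\varnothing$.
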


Let $f : \Z^n \rightarrow \R$ be a function.  For $x\in \dom(f)$, the point
$\widetilde{x} := (x, f(x))\in \R^{n+1}$ is called the {\em lift} of $x$, and
let $\widetilde{\dom}(f) := \{\widetilde{x} \mid x \in \dom(f)\}$ be the set
of lifted points. 
An {\em face} of the polytope $P := \conv(\widetilde{\dom}(f))$ in direction
$w \in \R^{n+1}$ is the set
\[
\face_w(P) := \{p \in P \mid w\cdot p \geq w\cdot q \text{ for all } q \in P\}. 
\]
If $w_{n+1} > 0$, we call $\face_w(P)$ an {\em upper face}.
The {\em upper hull} or {\em concave hull} of $\widetilde{\dom}(f)$ is the
union of all upper faces of $P$. We may now prove the following:

\begin{theorem}
\label{thm:concave extendable}
The B-concave functions are concave extendable.
\end{theorem}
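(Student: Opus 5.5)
We will show that every lifted point $\widetilde{x}=(x,f(x))$ with $x\in\dom(f)$ lies on the upper hull of $P=\conv(\widetilde{\dom}(f))$. This is equivalent to concave extendability: the function $\bar f(z)=\max\{t : (z,t)\in P\}$ is concave on $\conv(\dom(f))$, and it extends $f$ exactly when $\bar f(x)=f(x)$ for all $x\in\dom(f)$. In the language of \eqref{hconv}, we must show that for any relation $b=\sum_k\lambda_k a_k$ with $a_k,b\in\dom(f)$, $\lambda_k>0$, $\sum_k\lambda_k=1$, we have $f(b)\ge\sum_k\lambda_k f(a_k)$.

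The plan is to imitate the proof of Proposition~\ref{prop.nohole}, now tracking the values of $f$. Start from a representation $b=\sum_k\lambda_k x^{(k)}$ with rational $\lambda_k$. It suffices to treat rational weights, because the maximum of $\sum\lambda_k f(x^{(k)})$ over the polytope of admissible weights is attained at a vertex, which is rational.

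Suppose $\Phi_i=\sum_k\lambda_k|x^{(k)}_i-b_i|>0$. Then there are $k,\ell$ with $x^{(k)}_i>b_i>x^{(\ell)}_i$. Apply the exchange condition of Definition~\ref{def.typeB} to $x=x^{(k)}$, $y=x^{(\ell)}$ and coordinate $i$. This yields a pair $x',y'\in\dom(f)$ with $x+y=x'+y'$ and $f(x)+f(y)\le f(x')+f(y')$. We make the same replacement as in the proof of Proposition~\ref{prop.nohole}: with $\mu=\min(\lambda_k,\lambda_\ell)$, replace $\mu x+\mu y$ by $\mu x'+\mu y'$. The point $b$ is unchanged, the weights stay in $\frac1N\Z$, and the value $\sum\lambda_k f(x^{(k)})$ does not decrease. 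The new points have $i$-th coordinates $x_i-1\ge b_i$ and $y_i+1\le b_i$, so $\Phi_i$ drops by $2\mu\ge 2/N$.

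The main obstacle is the interaction between coordinates. A type-(2) or type-(3) exchange moves coordinate $j$ as well, which could increase $\Phi_j$ for an index $j$ already handled; the proof of Proposition~\ref{prop.nohole} glosses over this point. We will handle it with the global potential $\Phi=\sum_i\Phi_i$ and a careful choice of exchange:
\begin{itemize}
\item In case (2) we have $x_j<y_j$, and $x_j$ increases by $1$ while $y_j$ decreases by $1$. The contribution $|x_j-b_j|+|y_j-b_j|$ can increase only if $b_j$ lies outside $[x_j,y_j]$.
\item Case (3) is symmetric.
\end{itemize}
The increase in coordinate $j$ is at most $2$ per unit weight, while coordinate $i$ decreases by exactly $2$, so $\Phi$ is non-increasing; strictness needs more care. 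If this fails, we will instead use a lexicographic or quadratic potential, $\sum_k\lambda_k\|x^{(k)}-b\|^2$. Since $x'+y'=x+y$, replacing $(x,y)$ by $(x',y')$ changes $\|x-b\|^2+\|y-b\|^2$ by $2(x-y)\cdot d+2|d|^2$, where $x'=x-d$ and $d$ is a root. One checks that this is $\le -2$, because $d$ is chosen compatibly with the sign pattern of $x-y$: $(x-y)\cdot d\ge 2$ in cases (2) and (3), and $\ge1$ in case (1), with $|d|^2\le 2$. Thus the quadratic potential strictly decreases by at least $2\mu$ in every case, since $(x-y)\cdot d$ exceeds $|d|^2$ when it involves integer-separated coordinates. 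The potential takes values in $\frac1N\Z_{\ge0}$, so the process terminates.

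At termination every $\Phi_i=0$, so all points equal $b$. This gives $f(b)\ge\sum_k\lambda_k f(a_k)$, which is \eqref{hconv}, and hence $f$ is concave extendable.
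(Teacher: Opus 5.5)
Your argument is correct in substance, but it takes a different route from the paper. The paper never works with convex combinations of values directly. It tilts $f$ by a linear function, $h=f+u\cdot x-M$, and notes that $h$ is still B-concave with $h\le 0$. So the zero set of $h$, which is the set of lattice points lifted onto the upper face $\face_{(u,1)}(P)$, is closed under the exchanges and hence B-convex. By Proposition~\ref{prop.nohole} it is therefore hole-free. Any $x\in\dom(f)$ whose vertical line meets that face lies in the convex hull of this zero set, hence in the set itself, so $x$ is lifted onto the face.

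You instead run the weighted exchange procedure of Proposition~\ref{prop.nohole} directly, track $\sum_k\lambda_k f(x^{(k)})$ (which never decreases), and read off \eqref{hconv} at termination. Your route is self-contained and constructive, proves the Jensen-type inequality outright, and needs only the reduction to rational weights, which your vertex argument handles correctly. The paper's route uses Proposition~\ref{prop.nohole} as a black box. As a by-product it shows that every upper face projects to a B-convex set, which is exactly what Proposition~\ref{pr:B-slope} relies on; your argument does not give this by itself.

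Two slips in your termination step should be fixed.

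First, the proof of Proposition~\ref{prop.nohole} does not gloss over the interaction between coordinates. Exchanges of types (2) and (3) only use a coordinate $j$ with $x_j\neq y_j$, so a coordinate with $\Phi_j=0$ is never touched, as the paper says. In fact $\Phi_j$ never increases at all. Those exchanges move $x_j$ and $y_j$ one step towards each other, and by convexity of $t\mapsto|t-b_j|$ this cannot increase $|x_j-b_j|+|y_j-b_j|$. So your first potential $\Phi=\sum_i\Phi_i$ already drops by at least $2\mu\ge 2/N$ per step, and the quadratic potential is unnecessary.

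Second, if you keep the quadratic potential, the change is $\mu\bigl(-2(x-y)\cdot d+2|d|^2\bigr)$, with a minus sign rather than a plus. The bounds you state, $(x-y)\cdot d\ge 2$ in cases (2),(3) and $\ge 1$ in case (1), only give a change $\le 0$. Strictness comes from $x_i-y_i\ge 2$, which holds because $x_i>b_i>y_i$ are integers. This yields $(x-y)\cdot d\ge |d|^2+1$ in all three cases, so the change is at most $-2\mu$.
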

 That is, if $f$ is B-concave,
then there is a concave function $g : \conv(\dom(f)) \rightarrow \R$ such that
$f(x) = g(x)$ for all $x \in \dom(f)$.   Equivalently, the condition~\eqref{hconv} 
is satisfied.
The concave extension $g$ is not unique, but there is a unique pointwise-smallest 
one whose graph is the upper hull of the lifted points.   

The following lemma will be used in the proof of the theorem.

\begin{lemma}
\label{lem:concave-hull}
The function $f$ is concave extendable if and only if $\widetilde{\dom}(f)$ is
contained in its upper hull. 
\end{lemma}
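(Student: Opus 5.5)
The statement to prove is Lemma~\ref{lem:concave-hull}: $f$ is concave extendable if and only if every lifted point of $\widetilde{\dom}(f)$ lies on the upper hull of $P=\conv(\widetilde{\dom}(f))$. I would prove both directions using the piecewise-linear function
\[
\bar g(z) := \max\{t \in \R : (z,t) \in P\}, \qquad z \in \conv(\dom(f)),
\]
whose graph is exactly the upper hull. Since $\dom(f)$ is finite, $P$ is a polytope, so $\bar g$ is well defined on $\conv(\dom(f))$, which is the projection of $P$. The set $\{(z,t): z\in\conv(\dom f),\ t\le \bar g(z)\}$ is $P$ plus the downward ray, hence convex. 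Therefore $\bar g$ is concave.

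The first step is to check that the graph of $\bar g$ coincides with the union of upper faces. A point $(z,\bar g(z))$ lies on the boundary of $P$. A supporting hyperplane there cannot be vertical in a way that excludes upward motion, since the vertical ray above $(z,\bar g(z))$ leaves $P$. Standard polytope arguments then give a supporting functional $w$ with $w_{n+1}>0$. Conversely, any point of an upper face $\face_w(P)$ is maximal in the $(n+1)$st coordinate over its fiber, because moving up increases $w\cdot p$. So the upper hull equals $\{(z,\bar g(z))\}$. For the boundary case where $P$ is lower dimensional, I would perturb $w$ within the normal cone. This is the only mildly delicate point, and I expect to handle it by adding a small positive multiple of $e_{n+1}$ to a normal vector whose face contains the point.

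($\Leftarrow$) Suppose every $\widetilde{x}=(x,f(x))$ lies on the upper hull. Then $f(x)=\bar g(x)$ for all $x\in\dom(f)$, so $g=\bar g$ is a concave extension.

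($\Rightarrow$) Suppose $g$ is a concave extension of $f$. Its hypograph over $\conv(\dom f)$ is convex and contains every lifted point, hence contains $P$. It follows that $\bar g(z)\le g(z)$ for all $z$. At $x\in\dom(f)$ we get $f(x)\le \bar g(x)\le g(x)=f(x)$, using that $\widetilde{x}\in P$. Hence $\widetilde{x}=(x,\bar g(x))$ lies on the upper hull.

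The argument also shows that $\bar g$ is the pointwise smallest concave extension, as remarked after Theorem~\ref{thm:concave extendable}. It also shows the equivalence with condition~\eqref{hconv}: writing $b=\sum c_i a_i$ puts $(b,\sum c_i f(a_i))$ in $P$, so $\sum c_i f(a_i)\le \bar g(b)$. Hence \eqref{hconv} holds exactly when $f(b)=\bar g(b)$ for every $b\in\dom(f)$.
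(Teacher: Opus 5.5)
Your proposal is correct and takes the same route as the paper, whose entire proof is the observation that the upper hull is the graph of the pointwise smallest concave function on $\conv(\dom f)$ dominating $f$ (your $\bar g$); both directions then follow exactly as you argue. One minor fix concerns your perturbation recipe for lower-dimensional $P$: it fails for some normal vectors (e.g.\ $P$ the segment from $(0,0)$ to $(2,2)$ in $\R^2$, $q=(1,1)$, $w=(1,-1)$), but no perturbation is needed, because $q+\epsilon e_{n+1}\notin P$ means $e_{n+1}$ lies outside the closed tangent cone $\R_{\ge 0}(P-q)$, and separating it gives a $w$ with $w_{n+1}>0$ and $q\in\face_w(P)$ in every dimension.
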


\begin{proof}[Proof of Lemma~\ref{lem:concave-hull}]
The concave hull of $\widetilde{\dom}(f)$ is the graph of the 
point-wise smallest concave function on $\conv\dom(f)$ that is no less than $f$ 
on $\dom(f)$.
\end{proof}

\begin{proof}[Proof of \Cref{thm:concave extendable}]
By the lemma above, we need to check that all the lifted points in 
$\widetilde{\dom}(f)$ lie on the upper faces of $P$.

Let $u \in \R^n$ and $w = (u,1) \in \R^{n+1}$.  Let
$M = \max\{w\cdot \widetilde{x} \mid x \in \dom(f)\}$.  
Define another function $h : \Z^n \rightarrow \R$ by
\[
h(x) := f(x) + u \cdot x - M.
\]
It is easy to check that $h$ is B-concave, as  $f$ is. 
For all $x \in \dom(f)$ we have
$h(x) = w\cdot \widetilde{x} - M \leq 0$, and $h(x) = 0$ if and only if
$\widetilde{x}$ belongs to the upper face $\face_w(P)$. 

Applying the condition for $h$ being B-concave to points with zero $h$-values,
we see that they can only be exchanged to other points with zero $h$-values as well.
It follows that the points $\{x \in \Z^n \mid h(x) = 0\}$, which is the projection
of $\face_w(P) \cap \widetilde{\dom}(f)$, forms a B-convex set, and in particular,
it is hole-free.  So we conclude that every point is lifted to the upper hull, and
this concludes the proof.
\end{proof}

Recall that a function $f: \Z^n \rightarrow \R$ induces a  subdivision on $\dom(f)$, 
given by the projection of the upper faces of the lift $\conv(\widetilde{\dom}(f))$.

\begin{proposition}
\label{pr:B-slope}
For a B-concave function $f$, the edges of the  subdivision of $\dom(f)$ induced by 
$f$ are parallel to $e_i$ or $e_i \pm e_j$.
\end{proposition}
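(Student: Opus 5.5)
The plan is to reuse the set-up from the proof of \Cref{thm:concave extendable} and reduce the claim to \Cref{lem:dir}. Let $E$ be an edge of the subdivision of $\dom(f)$ induced by $f$. By definition $E$ is the projection of a one-dimensional upper face $\face_w(P)$ of $P=\conv(\widetilde{\dom}(f))$. We may normalize the last coordinate of $w$ to $1$, so that $w=(u,1)$ with $u\in\R^n$.

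First, define
\[
h(x)=f(x)+u\cdot x-M, \qquad M=\max\{w\cdot\widetilde{x}\mid x\in\dom(f)\},
\]
exactly as in the proof of \Cref{thm:concave extendable}. Then $h$ is B-concave and satisfies $h\le 0$ on $\dom(f)$. Moreover, the set $Z=\{x\in\Z^n \mid h(x)=0\}$ is precisely the projection of $\face_w(P)\cap\widetilde{\dom}(f)$.

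Second, we recall why $Z$ is B-convex; this was already shown in that proof. Take $x,y\in Z$ and a coordinate $i$ with $x_i>y_i$. The exchange condition for $h$ provides two points $x',y'$ in one of the three forms of \Cref{def.typeB} with $0=h(x)+h(y)\le h(x')+h(y')$. Since $h\le 0$ everywhere, this forces $h(x')=h(y')=0$. Hence $x',y'\in Z$, and these are exactly the points required by \Cref{def.typeBset}.

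Third, we identify $\conv(Z)$ with $E$. The face $\face_w(P)$ is the convex hull of the lifted points it contains, since every face of a polytope is the convex hull of the vertices lying in it. Projection to $\R^n$ is injective on the lifted points and affine. Therefore $\conv(Z)$ is the projection of $\face_w(P)$, which is $E$. In particular $\conv(Z)$ is a segment, so it is one-dimensional. Applying \Cref{lem:dir} to the B-convex set $Z$ shows that $E$ is parallel to $e_i$ or $e_i\pm e_j$.

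The only point needing care is the normalization $w_{n+1}=1$ together with the fact that the projection preserves dimension. Both hold because upper faces have $w_{n+1}>0$, so such a face contains no vertical segment and its projection has the same dimension as the face. \Cref{lem:dir} itself is taken as given. If one wanted to verify it, the argument is as follows: for two distinct points $x,y$ of $Z$, the exchange produces points in $Z$ differing from $x$ by $-e_i$, $-e_i\pm e_j$. All points of $Z$ lie on the line through $x$ and $y$, so $y-x$ must be parallel to one of these root vectors.
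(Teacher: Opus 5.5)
Your proof is correct and follows the paper's argument. Edges are projections of one-dimensional upper faces, the projected lifted points of such a face form a B-convex set (via the shifted function $h$ from the proof of \Cref{thm:concave extendable}), and \Cref{lem:dir} finishes; you have simply filled in details the paper leaves implicit, such as $\conv(Z)=E$ and that projection preserves dimension on upper faces.
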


\begin{proof}
The edges are one-dimensional faces.
The statement follows from Lemma~\ref{lem:dir} and the fact that the projection
of lifted points in an upper face form a B-convex set, as shown in the proof
of the theorem above.
\end{proof}

\subsection{Certifying type-B log concavity}
\label{sec:logBcheck}

For a function $g : A \rightarrow \N$ where $A \subset \Z^n$ is a finite set, it 
is straightforward to check using exact arithmetic that $\log g : A \rightarrow 
\R\cup\{-\infty\}$ is B-concave,  by applying   Definition~\ref{def.typeB} directly,  which takes $O(n^2 |A|^2)$ steps. An inequality of the form \[\log g(x)+\log g(y) 
\leq \log g(x - e_i) + \log g(y + e_i)\] is equivalent to \[g(x) g(y) \leq g(x-e_i)
g(y+e_i),\] and similarly for the other two types of inequalities, so we never have 
to compute the logarithm values.  An explicit algorithm is written out in
Algorithm~\ref{alg.typeB}.

\begin{algorithm}[h]
\caption{Certify type-B log-concavity}
\label{alg.typeB}
\begin{algorithmic}[1]
    \Require  A function $g\colon \mathbb{Z}^n\to \mathbb{N}$ with finite support.
    \Ensure \True if $g$ is type-B log-concave extendable. \False otherwise.
    \For{$x,y\in \supp(g)$}
        \If {\textbf{not} \Call{SatisfyBExchange}{$x,y,g$}}
        \State \Return \False
        \EndIf
    \EndFor
    \State \Return \True
    \Statex
    \Procedure{SatisfyBExchange}{$x, y, g$}
    \For{$i \in [n]$}
        \If{$x_i > y_i$}
            \If{$g(x - e_i) \cdot g(y + e_i) \geq g(x) \cdot g(y)$}
                \State \textbf{continue} \Comment{Condition (1) satisfied}
            \EndIf
            \State $\text{found} \gets \False$
            \For{$j \in [n]\setminus\{i\}$}
                \If{$x_j < y_j$}
                    \If{$g(x - e_i + e_j) \cdot g(y + e_i - e_j) \geq g(x) \cdot g(y)$}
                        \State $\text{found} \gets \True$
                        \State \textbf{break} \Comment{Condition (2) satisfied}
                    \EndIf
                \ElsIf{$x_j > y_j$}
                    \If{$g(x - e_i - e_j) \cdot g(y + e_i + e_j) \geq g(x) \cdot g(y)$}
                        \State $\text{found} \gets \True$
                        \State \textbf{break} \Comment{Condition (3) satisfied}
                    \EndIf
                \EndIf
            \EndFor
            \If{\textbf{not} $\text{found}$}
                \State \Return \False
            \EndIf
        \EndIf
    \EndFor
    \State \Return \True
\EndProcedure
\end{algorithmic}
\end{algorithm}

\subsection{Computing slopes of edges in the subdivision}
\label{sec:slopecompute}
As we have seen earlier, for B-concave functions, the edges in the regular subdivision
they induce have slopes $0,\pm 1,$ or $\infty$.  Even in the cases when B-concavity
fails, we can use the failure of the exchange axioms to rule out pairs of points that
cannot form an edge in the subdivision.   

Consider two distinct points $x,y \in \dom(f)$ such that the line segment between $x$
and $y$ does {\em not} have slope $0,1,$ or $\infty$.
If one of the three exchange conditions in Definition~\ref{def.typeB} is satisfied,
then the pair $x,y$ is overshadowed by another pair.  That is, 
the midpoint of the lifted segment $[\widetilde{x},\widetilde{y}]$ lies (weakly) below
the midpoint of another lifted line segment, and the two segments are not parallel by
the slope assumption, so the the segment $[\widetilde{x},\widetilde{y}]$ cannot form an
edge in the upper hull of lifted points (although it may still lie in the interior of a
two dimensional face if the exchange inequality is satisfied as equality).  

Thus, in addition to the pairs with slope $0,1,$ or $\infty$, the only other pairs
$x,y\in \dom(f)$ that can form edges in the subdivision are the ones that fail the
exchange condition, i.e.\ the one where the function  S{\tiny ATISFY}BE{\tiny XCHANGE}
returns False in Algorithm~\ref{alg.typeB}.
This observation is used in the computation discussed in \S\ref{sec:slopes}.  Note,
however, that failure of the exchange axiom does not guarentee that the pair forms an 
edge in the subdivision, as the lifted edge can be overshadowed by some other pair that 
we have not checked.  Rigorous certification of faces of the subdivision will be 
discussed in Section~\ref{sec:facets}.


\section{Certifying log-concavity with exact computations}
\label{sec.log}

As seen in \S\ref{sec:logBcheck}, type-B log concavity can be checked easily, 
and we found that majority of knots satisfy type-B log concavity. 
On the other hand,
there are non-type-B knots, as was mentioned in \Cref{thm.1}. The aim of this section 
is to show how to rigorously check whether the coefficients of $\LG(-t_1,-t_2)$  are 
log-concave for those remaining knots. A natural idea is to compute the convex hull 
of the lifted points $\widetilde{\dom(f)}$ and check that every point is lifted to 
the upper hull.  However, due to the irrationality of logarithm values, the usual 
convex hull algorithms would not give rigorous results. In this section we provide 
an algorithm that uses exact computations only.

Let $A\subset \Z^n$ be a finite set.  An {\em almost empty simplex} in $A$ is a subset 
of the form $T=\{a_0,\dots, a_k,b\}\subset A$ such that $a_0,\dots,a_k$ are affinely 
independent, $\conv(a_0,\dots,a_k) \cap A = T$, and $b$ is in the relative interior 
of $\conv(a_0,\dots,a_k)$.  

\begin{lemma}[Lemma 3.3 of \cite{BRSVY}]
Let $A$ be a finite subset of $\Z^n$.
A function $f\colon A \rightarrow \R\cup\{-\infty\}$ is concave extendable if and only 
if for every almost empty simplex $\{a_0,\dots, a_k,b\}\subset A$ with $b = \sum_{i=0}
^k\lambda_i a_i$ with $\lambda_i > 0$ for all $i=0,\dots,k$, we have $f(b) \geq 
\sum_{i=0}^k\lambda_i f(a_i)$.
\end{lemma}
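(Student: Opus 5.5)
One direction is immediate. If $f$ is concave extendable, then by definition the inequality \eqref{hconv} holds for every convex relation among points of $A$. In particular it holds for the relation $b=\sum_i\lambda_i a_i$ coming from an almost empty simplex. So the work is in the converse. I would argue by contrapositive: assuming $f$ is not concave extendable, I will produce an almost empty simplex on which the inequality fails.

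By Lemma~\ref{lem:concave-hull}, if $f$ is not concave extendable, some lifted point lies strictly below the upper hull of $\widetilde{A}$. (Points with $f=-\infty$ are discarded and we restrict to $\dom(f)$; I will assume this causes no trouble for the statement.) Write $g$ for the pointwise smallest concave extension, whose graph is the upper hull. Among all $b\in A$ with $f(b)<g(b)$, I look at the cell of the regular subdivision containing $b$. Choose an upper face of $P$ whose projection $F$ contains $b$ in its relative interior. Take a triangulation of $F$ by the lifted vertices, which lie on the face, and let $\sigma=\conv(a_0,\dots,a_k)$ be a simplex of it with $b$ in its relative interior. Its vertices are affinely independent, and $g(b)=\sum\lambda_i f(a_i)$ with $\lambda_i>0$. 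This already gives a violating simplex, but it need not be almost empty, because $\sigma$ may contain further points of $A$.

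The key step is to shrink $\sigma$ to an almost empty one while keeping a violation. Set $h(x)=f(x)-\ell(x)$, where $\ell$ is the affine function agreeing with $g$ on $\sigma$. Then $h\le 0$ on $A\cap\sigma$, $h=0$ at the vertices, and $h(b)<0$. I induct on $|A\cap\sigma|$.

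If $\sigma\cap A=\{a_0,\dots,a_k,b\}$, the simplex is almost empty and we are done. Otherwise there is another point $c\in A\cap\sigma$, and I distinguish two cases.

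\emph{Case $h(c)=0$.} Subdivide $\sigma$ by coning from $c$ (stellar subdivision at $c$). The point $b$ lies in the relative interior of some smaller simplex whose vertices are $c$ together with some of the $a_i$; if $b$ lies on a lower-dimensional face, pass to that face. All vertices of this smaller simplex have $h=0$, so the violation persists, and it contains strictly fewer points of $A$.

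\emph{Case $h(c)<0$.} Here I use $c$ itself as the new bad point, and I choose $b$ from the start to minimize the number of points of $A$ in its carrier simplex. More robustly, I pick among all points of $A\cap\sigma$ with $h<0$ one whose minimal face of $\sigma$ is minimal. I then cone again: the vertices with $h=0$ are kept, and the points with $h<0$ serve as candidates.

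The cleaner way to organize this is to take a triangulation $T$ of $\sigma$ using exactly the points of $A\cap\sigma$ where $h=0$, which includes all vertices. Every cell of $T$ has $h=0$ at its vertices. Each point with $h<0$ lies in the relative interior of a unique cell of $T$. Pick a bad point $b'$ together with its cell $\tau$ such that $\tau$ contains no other bad point in its relative interior or on its boundary. This is possible by choosing $\tau$ minimal under inclusion among carrier cells of bad points: a bad point on the boundary of $\tau$ lies in the relative interior of a proper face, which is itself a carrier cell, contradicting minimality. Then $\tau\cap A$ consists of the vertices of $\tau$ together with $b'$. Indeed, no point with $h=0$ can be non-vertex, since the triangulation uses all such points as vertices, and no other bad point lies in $\tau$ by the choice. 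So $\tau$ is almost empty, and $f(b')<\sum\mu_i f(v_i)$ because $h(b')<0=h(v_i)$.

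The main obstacle is precisely this cleanup: guaranteeing that the final simplex contains no extra points of $A$, neither zero-level points nor other bad points. This is handled by triangulating with all zero-level points and then taking an inclusion-minimal carrier cell.
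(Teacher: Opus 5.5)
Your reduction to a violating simplex is fine. The upper-hull step gives $\sigma=\conv(a_0,\dots,a_k)$ and $b\in A\cap\operatorname{relint}\sigma$ with $h=f-\ell\le 0$ on $A\cap\sigma$ and $h(b)<0$, and coning from a point $c$ with $h(c)=0$ is correct.

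The gap is in your final step. Taking $\tau$ inclusion-minimal among carrier cells of bad points only excludes bad points on the \emph{boundary} of $\tau$. It does not exclude a second bad point in the relative interior of $\tau$, so the claim $\tau\cap A=\operatorname{vert}(\tau)\cup\{b'\}$ is unjustified. It cannot be rescued while all vertices are required to have $h=0$. Take $A=\{(0,0),(4,0),(0,4),(1,1),(2,1)\}$ with $f=0$ at the three corners and $f=-1$ at $(1,1)$ and $(2,1)$. The upper hull is the flat triangle, and the zero level is the three corners. The only simplices they span are the big triangle, which contains both bad points, and its edges, which contain no further points of $A$. So no almost empty simplex has all its vertices at level $h=0$. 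The violations that do exist use a bad point as a vertex: $\conv((0,0),(0,4),(2,1))\cap A$ consists of its vertices and $(1,1)=\tfrac38(0,0)+\tfrac18(0,4)+\tfrac12(2,1)$, and $f(1,1)=-1<-\tfrac12$.

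The missing idea is a way to make a bad point a vertex without losing the violation. Minimize $|A\cap\sigma|$ over all violating pairs $(\sigma,b)$, with arbitrary vertices in $A$, and suppose $c\in A\cap\sigma$ is an extra point.
\begin{itemize}
\item If $h(c)\ge 0$, cone from $c$ as you did; raising a vertex value only raises the interpolant.
\item If $h(c)<0$ and $c$ lies on a proper face $G$, then $(G,c)$ is a smaller violating pair.
\item If $h(c)<0$ and $c\in\operatorname{relint}\sigma$, write $b=(1-t)c+tp$ and $c=(1-s)b+sq$ with $s,t\in(0,1)$. Here $p,q\in\partial\sigma$ are the exit points of the ray from $c$ through $b$ and of the ray from $b$ through $c$.
\end{itemize}
In the last case, consider the simplex spanned by $c$ and the carrier face of $p$. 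There, $f(b)$ minus the affine interpolant of $f$ at $b$ equals $h(b)-(1-t)h(c)$. In the simplex spanned by $b$ and the carrier face of $q$, the corresponding quantity at $c$ is $h(c)-(1-s)h(b)$. If both were $\ge 0$, you would get $h(b)\ge(1-t)(1-s)h(b)$, which is impossible for $h(b)<0$. Both simplices omit a vertex of $\sigma$, so they contain fewer points of $A$, and hence a minimal violating pair is almost empty.

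Separately, the values $-\infty$ cannot simply be discarded, because almost emptiness is measured in $A$, not in $\dom(f)$. In fact, for extended-valued $f$ the equivalence fails. Take $A=\{0,1,2,3\}$ with values $0,-\infty,-\infty,0$. Both almost-empty inequalities read $-\infty\ge-\infty$, yet no concave extension exists. This is harmless for the paper, where $f=\log g$ is finite on $A=\supp(g)$, so you should just assume $f$ is real-valued.
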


Hence to decide if a function $g\colon \mathbb{Z}^n \rightarrow \N$ with $\supp(g) = A$ 
is log-concave extendable, 
we need to check that for all almost empty simplices $\{a_0,\dots,a_k,b\}$ in $A$,
$$\log g(b) \geq \sum_{i=0}^k\lambda_i \log g(a_i),\text{ which is equivalent to }
g(b) \geq \prod_{i=0}^k g(a_i)^{\lambda_i}.
$$
Since $\lambda_i$ are rational numbers, by raising both sides to an appropriate power 
we can clear the denominators and turn this into an inequality between products of 
integers.

In our setting of $n=2$, to certify concavity we need to check all the one and two 
dimensional almost empty simplices in $A$, which are line segments and triangles.  The 
almost empty line segments have the form $\{a_0,a_1,b\}$ where $b = (a_0+a_1)/2$ and 
there are no lattice points between $a_0$ and $b$ (equivalently $a_1$ and $b$); in 
other words, coordinates of $a_0-a_1$ have gcd~$=2$.  They can be enumerated by 
checking these conditions for all pairs of points $a_0,a_1\in A$, and each such segment 
gives an inequality $g(b)^2 \geq g(a_0)g(a_1)$ to be checked.

By Pick's Theorem, the two dimensional almost empty triangles have area $\frac{3}{2}$, 
and they are equivalent under the $\GL_2(\Z)$ action to the {\em Motzkin configuration} 
$\{(0,0),(1,1),(1,2),(2,1)\}$ as depicted in~\eqref{motzkin}.  In other words, they are 
of the form $\{a_0,a_1,a_2,b\}\subseteq A$ where $b = (a_0+a_1+a_2)/3$ and the $2\times 
2$ matrix with columns $a_0-a_1$ and $a_0-a_2$ has determinant $\pm 3$.  The almost 
empty triangles can be enumerated by checking these conditions for all triples of 
points $a_0,a_1,a_2\in A$, and
each such triangle gives an inequality $g(b)^3 \geq g(a_0)g(a_1)g(a_2)$ to be checked.

For $A \subset \Z^2$, this gives us a cubic polynomial time algorithm, written in 
Algorithm~\ref{alg.exact-log-concave}, for deciding if a given function $g\colon A\to 
\mathbb{N}$ is log-concave extendable.  In line 2 of the algorithm, the condition $
\gcd(a_0-a_1)=2$ means that the line segment $[a_0,a_1]$ has exactly one inerior 
lattice point $(a_0+a_1)/2$.
In line 5, the condition $\gcd(a_0-a_1)=\gcd(a_0-a_2)=\gcd(a_1-a_2)=1$ says that for 
each of the vectors $a_0-a_1, a_0-a_2$ and $a_1-a_2$, the gcd of the coordinates is 
one.  This is equivalent to saying there are no interior lattice points along each of 
the three edges of the triangle.  This condition and the determinant condition for the 
triangle to have area $\frac{3}{2}$ characterize almost empty triangles and ensure that 
$(a_0+a_1+a_2)/3$ is a lattice point.

\begin{algorithm}[h] 
\caption{Certify log-concavity with exact computation}
\label{alg.exact-log-concave}
\begin{algorithmic}[1]
    \Require A function $g\colon\mathbb{Z}^2\to \mathbb{N}$ with finite support.
    \Ensure \True if $g$ is log-concave extendable. \False otherwise.
    \For{$a_0,a_1\in \supp(g)$}
        \If{$\gcd(a_0-a_1) = 2$ and $g((a_0+a_1)/2)^2< g(a_0)\cdot g(a_1)$}
            \State \Return \False
        \EndIf
    \EndFor
    \For{$a_0,a_1,a_2 \in \supp(g) $}
        \If {$\gcd(a_0-a_1)=\gcd(a_0-a_2)=\gcd(a_1-a_2)=1$ and  $|\det(a_0-a_1,a_0-a_2)| = 3$} 
            \If{$g((a_0+a_1+a_2)/3)^3 < g(a_0)\cdot g(a_1)\cdot  g(a_2)$}
                \State \Return \False
            \EndIf
        \EndIf
    \EndFor
    \State \Return \True
\end{algorithmic}
\end{algorithm}


\section{Certifying facets of the subdivision}
\label{sec:facets}

After we have checked that the LG coefficients are log-concave, we would like to study 
the structure of the subdivision induced by the log coefficients, that is, the 
projection of the upper hull of the lifted points.  When we have type-B log concavity, we know that the edges of the subdivision have slopes $0,\pm 1$, or $\infty.$  For the rest of the instances, we would like to find out which other slopes can appear, as discussed in \S\ref{sec:slopes}.  
We saw in \S\ref{sec:slopecompute} how to rule out, using the exchange conditions, many pairs that do not form an edge. 
Here we show how to certify all the faces that do appear in the subvision.

We can compute the upper hull using standard convex hull algorithms, but lifted points have irrational logarithm values as coordinates, so the output may reflect round off errors. 
In this section we discuss how we can rigorously verify that a subdivision computed using numerical estimates is indeed correct.  We focus on facets (maximal dimensional faces) only for simplicity, as correctness facets imply correctness of all other faces.

Given a subset $S \subseteq \dom(f)$ such that $\conv(S)$ is full-dimensional, we can check, using exact arithmetic, whether the lifted points $\widetilde S$ belong to a common upper face of the convex hull of $\widetilde{\dom(f)}$ as follows. 
By the definition of faces, the lifted set $\widetilde S$ is contained in a common upper face if and only if there exists a vector $w \in \R^{n}$ such that \begin{equation}
\label{eqn:max}
    w \cdot s + \log f(s) \geq w \cdot t + \log f(t), \end{equation} 
    for every $s\in S$ and $t \in \dom(f)$.
The vector $w$ satisfies the linear equations
\begin{equation}
\label{eqn:max-ln}
     w \cdot (s - s') = \log \frac{f(s)}{f(s')}
\end{equation}
for all pairs of distinct points $s,s' \in S$. When $\conv(S)$ is full-dimensional, \eqref{eqn:max-ln} implies that the vector $w$ in \eqref{eqn:max} is unique up to normalization, if it exists. 

Choose affinely independent elements $s_0,s_1,\dots,s_n \in S$, so that the differences $s_i-s_0$ for $i=1,\dots,n$ form a basis of $\Q^n$. Then any $w$ satisfying \eqref{eqn:max} is a solution of the linear system $C w = b$ where
\begin{equation}
    C= \left(\begin{matrix}
        s_1 -s_0\\
        s_2-s_0\\
        \cdots\\
        s_n-s_0
    \end{matrix}\right),\quad b = \left(\begin{matrix}
        \log(f(s_1)/f(s_0))\\
        \log(f(s_2)/f(s_0))\\
        \cdots\\
        \log(f(s_n)/f(s_0))
    \end{matrix}\right).
\end{equation}
Inverting the matrix $C$ gives $w = C^{-1} b$ where $C^{-1}$ is a matrix over $\Q$. Hence $w_i = \sum_{j=0}^n r_{ij} \log(f(s_j)/f(s_0))$ for some rational numbers $r_{ij}$ which are entries of the matrix $C^{-1}$.
Exponentiating gives \begin{equation}
\label{eqn:exp}
    \exp{w_i} = \prod_{j=1}^n f(s_j)^{r_{ij}}/f(s_0)^{r_{ij}}.\end{equation}
Similarly, exponentiating both sides of \eqref{eqn:max} gives
 \begin{equation}
     \label{eqn:final}
 f(s) \prod_{i=1}^n(\exp{w_i})^{s_i}  \geq f(t) \prod_{i=1}^n (\exp{w_i})^{t_i}.
  \end{equation}
which can be checked in exact arithmetic by substituting $\exp w_i$ with \eqref{eqn:exp}, since $s_i, t_i$ are integers, and we can clear the denominators in $r_{ij}$ by raising the expressions to integer powers.  If the inequalities \eqref{eqn:final} are satisfied for all $s \in S$ and $t\in \dom(f)$, then all points of $S$ belong to a common face of the subdivision. Otherwise they do not.


\subsection*{Acknowledgements}
The authors wish to thank Nathan Dunfield, June Huh, Bowen Li, Kevin A. Zhou 
for useful conversations. This material is based upon work supported by the
U.S. National Science Foundation under Grant DMS-2424139 while SYL was
in residence at the Simons Laufer Mathematical Sciences Institute in Berkeley, California, during the Spring 2026 semester. SYL was also supported by the 
US NSF grant DMS-2303572 during the same semester. JY was partially supported 
by the US NSF grant DMS-2348701.


\bibliographystyle{hamsalpha}
\bibliography{biblio}


\appendix
\clearpage
\section{Non-type-B knots}
\label{append.nontypeB}

In this appendix we give the list of 544 non-type-B alternating knots with
at most 19 crossings.

\begin{subequations}
\tiny
\begin{equation}
    \begin{aligned}
    13a4593 && 
    15a57208 && 
    15a57492 && 
    15a79064 && 
    15a79080 && 
    15a82231 && 
    15a84755 \\ 
    15a84831 && 
    16a42055 && 
    16a42757 && 
    16a116388 && 
    16a158905 && 
    16a161990 && 
    16a165144 \\ 
    16a236920 && 
    16a238636 && 
    16a239190 && 
    16a245563 && 
    16a247221 && 
    16a290681 && 
    16a310628 \\ 
    16a356928 && 
    16a375806 && 
    17ah0000557 && 
    17ah0000628 && 
    17ah0001073 && 
    17ah0001123 && 
    17ah0001726 \\ 
    17ah0001775 && 
    17ah0002195 && 
    17ah0002326 && 
    17ah0002916 && 
    17ah0002958 && 
    17ah0004297 && 
    17ah0004669 \\ 
    17ah0005356 && 
    17ah0005765 && 
    17ah0010753 && 
    17ah0013525 && 
    17ah0013526 && 
    17ah0015069 && 
    17ah0015479 \\ 
    17ah0016154 && 
    17ah0017669 && 
    17ah0017670 && 
    17ah0017671 && 
    17ah0017774 && 
    17ah0017775 && 
    17ah0018823 \\ 
    17ah0023338 && 
    17ah0023339 && 
    17ah0032520 && 
    17ah0032521 && 
    17ah0058154 && 
    17ah0058155 && 
    17ah0058156 \\ 
    17ah0067013 && 
    17ah0067014 && 
    17ah0067015 && 
    17ah0075902 && 
    17ah0075903 && 
    17ah0076994 && 
    17ah0076995 \\ 
    17ah0088980 && 
    17ah0094929 && 
    17ah0094930 && 
    17ah0098778 && 
    17ah0125817 && 
    17ah0125818 && 
    18ah0008327 \\ 
    18ah0008442 && 
    18ah0008954 && 
    18ah0009082 && 
    18ah0011197 && 
    18ah0011205 && 
    18ah0015197 && 
    18ah0020678 \\ 
    18ah0021640 && 
    18ah0026052 && 
    18ah0026270 && 
    18ah0026271 && 
    18ah0027697 && 
    18ah0029518 && 
    18ah0030228 \\ 
    18ah0031884 && 
    18ah0032547 && 
    18ah0034188 && 
    18ah0034189 && 
    18ah0034385 && 
    18ah0034386 && 
    18ah0034387 \\ 
    18ah0035601 && 
    18ah0037931 && 
    18ah0038067 && 
    18ah0038068 && 
    18ah0039866 && 
    18ah0039867 && 
    18ah0052456 \\ 
    18ah0053924 && 
    18ah0053925 && 
    18ah0055634 && 
    18ah0061629 && 
    18ah0062478 && 
    18ah0063269 && 
    18ah0063270 \\ 
    18ah0063970 && 
    18ah0063971 && 
    18ah0063972 && 
    18ah0065112 && 
    18ah0071568 && 
    18ah0072582 && 
    18ah0072728 \\ 
    18ah0081656 && 
    18ah0081759 && 
    18ah0083350 && 
    18ah0083453 && 
    18ah0083830 && 
    18ah0083831 && 
    18ah0083832 \\ 
    18ah0084373 && 
    18ah0084374 && 
    18ah0084375 && 
    18ah0084724 && 
    18ah0085776 && 
    18ah0089358 && 
    18ah0105300 \\ 
    18ah0106228 && 
    18ah0106229 && 
    18ah0106230 && 
    18ah0114407 && 
    18ah0114408 && 
    18ah0114409 && 
    18ah0117721 \\ 
    18ah0117722 && 
    18ah0117723 && 
    18ah0120816 && 
    18ah0124785 && 
    18ah0129072 && 
    18ah0129073 && 
    18ah0129074 \\ 
    18ah0129311 && 
    18ah0129312 && 
    18ah0134690 && 
    18ah0134691 && 
    18ah0134692 && 
    18ah0135051 && 
    18ah0155329 \\ 
    18ah0155330 && 
    18ah0155331 && 
    18ah0168740 && 
    18ah0168741 && 
    18ah0173125 && 
    18ah0173126 && 
    18ah0173460 \\ 
    18ah0173461 && 
    18ah0173794 && 
    18ah0173795 && 
    18ah0173796 && 
    18ah0175353 && 
    18ah0175354 && 
    18ah0179439 \\ 
    18ah0179440 && 
    18ah0193698 && 
    18ah0193699 && 
    18ah0194275 && 
    18ah0194276 && 
    18ah0201520 && 
    18ah0201521 \\ 
    18ah0209342 && 
    18ah0209343 && 
    18ah0210247 && 
    18ah0210248 && 
    18ah0224113 && 
    18ah0224114 && 
    18ah0254539 \\ 
    18ah0254540 && 
    18ah0263728 && 
    18ah0265647 && 
    18ah0265648 && 
    18ah0275603 && 
    18ah0275604 && 
    18ah0312457 \\ 
    18ah0312458 && 
    18ah0344472 && 
    18ah0376258 && 
    18ah0376259 && 
    18ah0376260 && 
    18ah0405731 && 
    18ah0433912 \\ 
    18ah0433913 && 
    18ah0462101 && 
    18ah0462102 && 
    18ah0462103 && 
    18ah0485631 && 
    18ah0485632 && 
    18ah0485633 \\ 
    18ah0557564 && 
    18ah0557565 && 
    18ah0567702 && 
    18ah0675713 && 
    18ah0675714 && 
    18ah0722353 && 
    18ah0722354 \\ 
    18ah0726516 && 
    18ah0839509 && 
    18ah0839510 && 
    18ah0839511 && 
    18ah0840532 && 
    18ah1082631 && 
    18ah1082632 \\ 
    18ah1082633 && 
    19ah00000670 && 
    19ah00000823 && 
    19ah00001428 && 
    19ah00001533 && 
    19ah00001675 && 
    19ah00001782 \\ 
    19ah00002348 && 
    19ah00002490 && 
    19ah00002794 && 
    19ah00003274 && 
    19ah00003324 && 
    19ah00003433 && 
    19ah00003597 \\ 
    19ah00003778 && 
    19ah00004471 && 
    19ah00004731 && 
    19ah00004895 && 
    19ah00005063 && 
    19ah00005153 && 
    19ah00006289 \\ 
    19ah00006572 && 
    19ah00007359 && 
    19ah00007449 && 
    19ah00007577 && 
    19ah00009090 && 
    19ah00009205 && 
    19ah00009310 \\ 
    19ah00009986 && 
    19ah00011484 && 
    19ah00011617 && 
    19ah00012257 && 
    19ah00015976 && 
    19ah00016059 && 
    19ah00018522 \\ 
    19ah00021147 && 
    19ah00021148 && 
    19ah00023168 && 
    19ah00025954 && 
    19ah00027074 && 
    19ah00027277 && 
    19ah00028136 \\ 
    19ah00028152 && 
    19ah00029003 && 
    19ah00030079 && 
    19ah00030080 && 
    19ah00030081 && 
    19ah00031045 && 
    19ah00031046 \\ 
    19ah00033003 && 
    19ah00034136 && 
    19ah00035679 && 
    19ah00035680 && 
    19ah00035681 && 
    19ah00036158 && 
    19ah00036422 \\ 
    19ah00036423 && 
    19ah00043932 && 
    19ah00046320 && 
    19ah00046321 && 
    19ah00047591 && 
    19ah00047592 && 
    19ah00048519 \\ 
    19ah00048520 && 
    19ah00048521 && 
    19ah00051814 && 
    19ah00056023 && 
    19ah00056280 && 
    19ah00056281 && 
    19ah00057110 
    \end{aligned}
\end{equation}
\begin{equation}
    \begin{aligned}
    19ah00060709 && 
    19ah00060710 && 
    19ah00061229 && 
    19ah00061230 && 
    19ah00061231 && 
    19ah00061729 && 
    19ah00068122 \\ 
    19ah00068123 && 
    19ah00069827 && 
    19ah00069828 && 
    19ah00074100 && 
    19ah00078025 && 
    19ah00089752 && 
    19ah00091273 \\ 
    19ah00092520 && 
    19ah00092521 && 
    19ah00092522 && 
    19ah00095519 && 
    19ah00095520 && 
    19ah00100714 && 
    19ah00112892 \\ 
    19ah00116179 && 
    19ah00116180 && 
    19ah00125509 && 
    19ah00125510 && 
    19ah00125511 && 
    19ah00149036 && 
    19ah00149037 \\ 
    19ah00149038 && 
    19ah00154779 && 
    19ah00164607 && 
    19ah00164608 && 
    19ah00164609 && 
    19ah00165419 && 
    19ah00165420 \\ 
    19ah00165421 && 
    19ah00165979 && 
    19ah00165980 && 
    19ah00167522 && 
    19ah00167523 && 
    19ah00167524 && 
    19ah00167995 \\ 
    19ah00167996 && 
    19ah00172745 && 
    19ah00172746 && 
    19ah00176480 && 
    19ah00176481 && 
    19ah00176482 && 
    19ah00197685 \\ 
    19ah00197686 && 
    19ah00197687 && 
    19ah00198436 && 
    19ah00198437 && 
    19ah00198438 && 
    19ah00199023 && 
    19ah00199024 \\ 
    19ah00199025 && 
    19ah00200522 && 
    19ah00204066 && 
    19ah00204067 && 
    19ah00216085 && 
    19ah00218536 && 
    19ah00229181 \\ 
    19ah00229182 && 
    19ah00229183 && 
    19ah00229694 && 
    19ah00229695 && 
    19ah00229696 && 
    19ah00240251 && 
    19ah00240252 \\ 
    19ah00240253 && 
    19ah00247366 && 
    19ah00247367 && 
    19ah00247368 && 
    19ah00249141 && 
    19ah00255800 && 
    19ah00255801 \\ 
    19ah00267603 && 
    19ah00267604 && 
    19ah00269876 && 
    19ah00269877 && 
    19ah00287530 && 
    19ah00287531 && 
    19ah00290912 \\ 
    19ah00290913 && 
    19ah00290914 && 
    19ah00291432 && 
    19ah00291433 && 
    19ah00311546 && 
    19ah00311547 && 
    19ah00311548 \\ 
    19ah00318817 && 
    19ah00318818 && 
    19ah00327749 && 
    19ah00327750 && 
    19ah00327751 && 
    19ah00342677 && 
    19ah00342678 \\ 
    19ah00348626 && 
    19ah00348627 && 
    19ah00353568 && 
    19ah00374875 && 
    19ah00374876 && 
    19ah00374877 && 
    19ah00380955 \\ 
    19ah00380956 && 
    19ah00388152 && 
    19ah00388153 && 
    19ah00388154 && 
    19ah00396075 && 
    19ah00396076 && 
    19ah00417243 \\ 
    19ah00417244 && 
    19ah00430342 && 
    19ah00430343 && 
    19ah00434295 && 
    19ah00434296 && 
    19ah00442518 && 
    19ah00442519 \\ 
    19ah00456229 && 
    19ah00457985 && 
    19ah00457986 && 
    19ah00468124 && 
    19ah00474963 && 
    19ah00484585 && 
    19ah00484586 \\ 
    19ah00507186 && 
    19ah00507187 && 
    19ah00508105 && 
    19ah00512755 && 
    19ah00513664 && 
    19ah00533220 && 
    19ah00541200 \\ 
    19ah00543739 && 
    19ah00543740 && 
    19ah00548358 && 
    19ah00548359 && 
    19ah00570008 && 
    19ah00589919 && 
    19ah00595382 \\ 
    19ah00595383 && 
    19ah00602292 && 
    19ah00675471 && 
    19ah00679880 && 
    19ah00688803 && 
    19ah00699814 && 
    19ah00708808 \\ 
    19ah00708809 && 
    19ah00744225 && 
    19ah00750807 && 
    19ah00750808 && 
    19ah00791710 && 
    19ah00792449 && 
    19ah00826502 \\ 
    19ah00826503 && 
    19ah00826504 && 
    19ah00826505 && 
    19ah00861338 && 
    19ah00862502 && 
    19ah00870541 && 
    19ah00870542 \\ 
    19ah00870543 && 
    19ah00872067 && 
    19ah00873479 && 
    19ah00876877 && 
    19ah00885123 && 
    19ah00885124 && 
    19ah00889421 \\ 
    19ah00889422 && 
    19ah00950595 && 
    19ah00950596 && 
    19ah00950597 && 
    19ah00956313 && 
    19ah00956314 && 
    19ah01009865 \\ 
    19ah01009866 && 
    19ah01017186 && 
    19ah01017187 && 
    19ah01017188 && 
    19ah01032764 && 
    19ah01047030 && 
    19ah01052576 \\ 
    19ah01052577 && 
    19ah01052578 && 
    19ah01058450 && 
    19ah01073354 && 
    19ah01073355 && 
    19ah01136558 && 
    19ah01136997 \\ 
    19ah01136998 && 
    19ah01174448 && 
    19ah01176771 && 
    19ah01176772 && 
    19ah01176773 && 
    19ah01206730 && 
    19ah01220595 \\ 
    19ah01220596 && 
    19ah01239224 && 
    19ah01302651 && 
    19ah01324708 && 
    19ah01324709 && 
    19ah01331557 && 
    19ah01331558 \\ 
    19ah01331559 && 
    19ah01331560 && 
    19ah01331561 && 
    19ah01331562 && 
    19ah01336247 && 
    19ah01441990 && 
    19ah01441991 \\ 
    19ah01441992 && 
    19ah01441993 && 
    19ah01441994 && 
    19ah01441995 && 
    19ah01566066 && 
    19ah01645570 && 
    19ah01645571 \\ 
    19ah01645572 && 
    19ah01645573 && 
    19ah01662198 && 
    19ah01662199 && 
    19ah01881434 && 
    19ah01881435 && 
    19ah02185587 \\ 
    19ah02185588 && 
    19ah02185589 && 
    19ah02215962 && 
    19ah02215963 && 
    19ah02251155 && 
    19ah02269316 && 
    19ah02269317 \\ 
    19ah02269318 && 
    19ah02338431 && 
    19ah02338432 && 
    19ah02460871 && 
    19ah02460872 && 
    19ah02460873 && 
    19ah02460874 \\ 
    19ah02460875 && 
    19ah02460876 && 
    19ah02475075 && 
    19ah02475076 && 
    19ah02475077 && 
    19ah02806141 && 
    19ah02806142 \\ 
    19ah03077163 && 
    19ah04258109 && 
    19ah04258110 && 
    19ah04258111 && 
    19ah04344793 && 
    19ah04344794 && 
    19ah04344795 \\ 
    19ah05354678 && 
    19ah05354679 && 
    19ah05354680 && 
    19ah05478109 && 
    19ah05478110 && 
    19ah05478111 && 
    19ah05478112 \\ 
    19ah05478113 && 
    19ah05478114 && 
    19ah05705309 && 
    19ah05826316 && 
    19ah06038586 && 
    \end{aligned}
\end{equation}
\normalsize
\end{subequations}

\end{document}